\numberwithin{equation}{section}
\numberwithin{algorithm}{section}
\newtheorem{theorem}{Theorem}[section]
\newtheorem{lemma}{Lemma}[section]
\newtheorem{proposition}{Proposition}[section]
\newtheorem{corollary}{Corollary}[section]
\newtheorem{remark}{Remark}[section]
\def\BC{\mathbb C}
\def\BK{\mathbb K}
\def\BN{\mathbb N}
\def\BR{\mathbb R}
\def\cA{\mathcal A}
\def\d{\mathrm d}
\def\e{\mathrm e}
\def\Ga{\Gamma}
\def\Te{\Theta}
\def\Si{\Sigma}
\def\Om{\Omega}
\def\al{\alpha}
\def\be{\beta}
\def\ga{\gamma}
\def\de{\delta}
\def\ve{\varepsilon}
\def\te{\theta}
\def\ze{\zeta}
\def\la{\lambda}
\def\si{\sigma}
\def\vp{\varphi}
\def\om{\omega}
\def\f{\frac}
\def\nb{\nabla}
\def\ov{\overline}
\def\pa{\partial}
\def\wh{\widehat}
\def\wt{\widetilde}
\title{Recovery of Multiple Parameters in Subdiffusion from One Lateral Boundary Measurement\thanks{The work of B. Jin is supported by UK EPSRC grant EP/T000864/1 and EP/V026259/1, and a start-up fund from The Chinese University of Hong Kong. {The work of Y. Liu is supported by Grant-in-Aid for Early Career Scientists 20K14355 and 22K13954, JSPS.} The work of Z. Zhou is partly supported by Hong Kong Research Grants Council (15303122) and an internal grant of Hong Kong Polytechnic University (Project ID: P0038888, Work Programme: ZVX3)}}
\author{Siyu Cen\thanks{Department of Applied Mathematics, The Hong Kong Polytechnic University, Kowloon, Hong Kong, P.R. China. (\texttt{siyu2021.cen@connect.polyu.hk; zhizhou@polyu.edu.hk})} \and Bangti Jin\thanks{Department of Mathematics, The Chinese University of Hong Kong, Shatin, New Territories, Hong Kong, P.R. China (\texttt{bangti.jin@gmail.com, b.jin@cuhk.edu.hk}).} \and Yikan Liu\thanks{Research Center of Mathematics for Social Creativity, Research Institute for Electronic Science, Hokkaido University, N12W7, Kita-Ward, Sapporo 060-0812, Japan (\texttt{ykliu@es.hokudai.ac.jp})} \and Zhi Zhou\footnotemark[2]}
\date{\today}
\begin{document}

\maketitle

\begin{abstract}
This work is concerned with numerically recovering multiple parameters simultaneously in the subdiffusion model
from one single lateral measurement on a part of the boundary, while in an incompletely known medium. We prove
that the boundary
measurement corresponding to a fairly general boundary excitation uniquely determines the order of the fractional
derivative and the polygonal support of the diffusion coefficient, without knowing either the initial condition or the source.
The uniqueness analysis further inspires the development of a robust numerical algorithm for recovering the
fractional order and diffusion coefficient. The proposed algorithm combines small-time asymptotic
expansion, analytic continuation of the solution and the level set method. We present extensive numerical experiments
to illustrate the feasibility of the simultaneous recovery. In addition, we discuss the uniqueness of recovering
general diffusion and potential coefficients from one single partial boundary measurement, when the boundary
excitation is more specialized.\\
\textbf{Key words}: subdiffusion, lateral boundary measurement, discontinuous diffusivity, unknown medium, level set method

\end{abstract}



\section{Introduction}
This work is concerned with an inverse problem of simultaneously recovering multiple parameters
in a subdiffusion model from one single lateral boundary measurement in a partly unknown medium.
Let $\Om\subset\BR^d$ ($d=2,3$) be an open bounded domain with a Lipschitz and
piecewise $C^{1,1}$ boundary and $T>0$ be a fixed final time. Consider the following
subdiffusion problem for the function $u$:
\begin{align}\label{eqn:fde}
\begin{cases}
\pa_t^\al u+\cA u=f & \mbox{in }\Om\times(0,T],\\
a\pa_\nu u=g & \mbox{on }\pa\Om\times(0,T],\\
u(0)=u_0 & \mbox{in }\Om,
\end{cases}
\end{align}
where $u_0\in L^2(\Om)$ and (time-independent) $f\in L^2(\Om)$ are unknown initial and
source data, and $\nu$ denotes the unit outward normal vector to the boundary $\pa\Om$. The
elliptic operator $\cA$ is defined by
\begin{align*}
\cA u(x):=-\nb\cdot(a(x)\nb u(x)),\quad x\in\ov\Om.
\end{align*}
Without loss of generality, the diffusion coefficient $a$ is assumed to be piecewise constant:
\begin{align}\label{eqn:a}
a(x)=1+\mu\,\chi_D(x),
\end{align}
where {$\mu>-1$ is a nonzero unknown constant, $D$ is an unknown convex polyhedron in $\Om$
satisfying $\mathrm{diam}(D)<\mathrm{dist}(D,\pa\Om)$ and $\chi_D$ denotes the characteristic function of $D$}. In the model \eqref{eqn:fde},
$\pa_t^\al u$ denotes the Djrbashian-Caputo fractional derivative in time $t$ of order
$\al\in(0,1)$ defined by (\cite[p. 92]{KilbasSrivastavaTrujillo:2006} or \cite[Section 2.3]{Jin:book2021})
\begin{align*}
\pa_t^\al u{(t):}=\f1{\Ga(1-\al)}\int_0^t(t-s)^{-\al}u'(s)\,\d s.
\end{align*}

The model \eqref{eqn:fde} has attracted a lot of recent attention, due to its excellent
capability to describe anomalous diffusion phenomena observed in many engineering and physical
applications. The list of successful applications is long and still fast growing, e.g.,
ion transport in column experiments \cite{HatanoHatano:1998},
protein diffusion within cells \cite{Golding:2006} and contaminant transport in underground
water \cite{Kirchner:2000}. See the reviews \cite{MetzlerKlafter:2000,MetzlerJeon:2014}
for the derivation of relevant mathematical models and diverse applications. The model
\eqref{eqn:fde} differs considerably from the normal diffusion model due to the presence
of the nonlocal operator $\pa_t^\al u$: it has limited smoothing property
in space and slow asymptotic decay at large time \cite{KubicaYamamoto:2020,Jin:book2021}.

In this paper, we study mathematical and numerical aspects of an inverse problem of
recovering the diffusion coefficient $a$ and fractional order $\al$ from a single lateral boundary measurement
of the solution, without the knowledge of the initial data $u_0$ and source $f$. The Neumann data $g$ is taken to be separable:
\begin{align}\label{eq:g}
g(x,t)=\psi(t)\eta(x),
\end{align}
where $0\not\equiv\eta\in H^{\frac{1}{2}}(\pa\Om)$ satisfies the compatibility
condition $\int_{\pa\Om}\eta\d S=0$ and $\psi\in C^1(\BR_+)$ satisfies
\begin{align}\label{eq:g-2}
\psi(t)=\begin{cases}
0, & t<T_0,\\
1, & t>T_1,
\end{cases}
\end{align}
with $0< T_0<T_1<T$. The measurement data $h(x,t)=u(x,t)$ is taken on a part of the
boundary $\Ga_0\subset\pa\Om$. Note that the inverse problem involves
missing data ($u_0$ and $f$), whereas the available data is only on a partial
boundary. Thus, it is both mathematically and numerically very challenging, due to not
only the severe ill-posed nature and high degree of nonlinearity
but also the unknown forward map from the parameters $a$ and $\al$ to the data
$h(x,t)$.

The mathematical study on inverse problems for time-fractional models is of relatively recent
origin, starting from the pioneering work \cite{ChengNakagawa:2009} (see \cite{JinRundell:2015,
LiLiuYamamoto:2019b,LiYamamoto:2019a} for overviews) and there are several
existing works on recovering a space-dependent potential or diffusion coefficient from lateral
Cauchy data \cite{RundellYamamoto:2018,RundellYamamoto:2020,WeiYan:2021,JingYamamoto:2021,JinZhou:2021ip,Kian:2022}.
Rundell and Yamamoto \cite{RundellYamamoto:2018} showed that the lateral Cauchy data can uniquely
determine the spectral data when $u_0\equiv f\equiv0$, and proved the unique determination
of the potential using Gel'fand-Levitan theory. They also numerically studied the singular value spectrum of
the linearized forward map, showing the severe ill-posed nature of the problem. Later,
they \cite{RundellYamamoto:2020} relaxed the regularity condition on the boundary excitation
$g(t)$ in a suitable Sobolev space. Recently, Jing and Yamamoto \cite{JingYamamoto:2021}
proved the identifiability of multiple parameters (including order, spatially dependent potential,
initial value and Robin coefficients in the boundary condition) in a
time-fractional subdiffusion model with a zero boundary condition and source,
excited by a nontrivial initial condition from the lateral Cauchy data at both end points; see also \cite{JingPeng:2020}. Jin and Zhou
\cite{JinZhou:2021ip} studied the unique recovery of the potential, fractional order and either initial
data or source from the lateral Cauchy data, when the boundary excitation is judiciously chosen.
All these interesting works are concerned with the one-dimensional setting due to their
essential use of the inverse Sturm-Liouville theory. Wei et al \cite{WeiZhang:2023} numerically
investigated the recovery of the zeroth-order coefficient and fractional
order in a time-fractional reaction-diffusion-wave equation from lateral boundary data. A direct extension of these theoretical
works to the multi-dimensional case is challenging since the Gel'fand-Levitan theory
is no longer applicable. Kian et al. \cite{KianLiLiuYamamoto:2021}
provided the first results for the multi-dimensional case, including the uniqueness for identifying
two spatially distributed parameters in the subdiffusion model from one
single lateral observation with a specially designed excitation {Dirichlet input}; see also
\cite{HelinLassasZhang:2020} for a related study on determining the manifold from one measurement
corresponding to a specialized source. Kian
\cite{Kian:2022} studied also the issue of simultaneous recovery of these parameters along with the
order and initial data using a similar choice of the boundary data. However, in the
works \cite{KianLiLiuYamamoto:2021,Kian:2022}, the excitation data, which plays the role of infinity measurements,
is numerically inconvenient to realize, if not impossible at all; see Remark \ref{rem:general-a}
and the appendix for further discussions. These considerations motivate the
current work, i.e., to design robust numerical algorithm for recovering multiple parameters
from a single partial boundary measurement for multi-dimensional subdiffusion with a computable
excitation Neumann data, in the presence of a partly unknown medium.

In this work, we make the following contributions to the mathematical analysis and numerics of
the concerned inverse problem. First, we examine the feasibility to
recover multiple parameters. We show that if the coefficient $a$ is
piecewise constant as defined  in \eqref{eqn:a}, then one single boundary measurement can uniquely
determine the coefficient $a$ and fractional order $\al$, even though the initial data $u_0$ and
source $f$ are unknown. Note that the exciting Neumann data $g$ given in \eqref{eq:g} is easy to
realize and hence allows the numerical recovery. The proof relies on the asymptotic behavior of
Mittag-Leffler functions, analyticity in time of the solution, and the uniqueness of the inverse
conductivity problem (for elliptic problems) from one boundary measurement. In particular, the
subdomain $D$ can be either a convex polygon / polyhedron or a disc / ball, cf. Theorem \ref{thm:unique q}
and Remark \ref{rem:poly-ball}. This analysis strategy follows a well-established
procedure in the community, and roughly it consists of two steps. (1) Using the time-analyticity,
the uniqueness for the original inverse problem is reduced to the
one for an inverse problem for the corresponding time-independent elliptic equation; (2) The
reduction can be done by the Laplace transform or considering the asymptotics. Both strategies of reductions
are well known. For example, the former way is used for an Dirichlet-to-Neumann map for the inverse
coefficient problem for a multi-term time-fractional diffusion equation \cite{LiImanuvilovYamamoto:2016},
while the latter way is used for the Dirichlet-to-Neumann map for the inverse parabolic problem
\cite[Section 4, Chapter 9]{isakov2017inverse}. Second, the uniqueness analysis lends itself to the development
of a robust numerical algorithm: we develop a three-step recovery algorithm for identifying the
piecewise constant coefficient $a$ and the fractional order $\al$:
 (i) use the asymptotic behavior of the solution of problem \eqref{eqn:fde} near $t=0$ to recover $\al $;
 (ii) use analytic continuation to extract the solution of problem \eqref{eqn:fde} with zero $f$ and $u_0$;
 (iii) use the level set method to recover the shape of subdomain $D$.
To the best of our knowledge, this is the first work on the numerical recovery of a (piecewise constant)
diffusion coefficient in the context of multi-dimensional subdiffusion model with missing initial and source data.
Last, we present extensive numerical experiments to illustrate the feasibility of the approach.
We refer interested readers to  \cite{RundellZhang:2018jcp, PrakashHriziNovotny:2022} for some numerical studies
for identifying a piecewise constant source from the boundary measurement.

The rest of the paper is organized as follows. In Section \ref{sec:prelim} we describe preliminary
results on the model, especially time analyticity of the data. Then in Section
\ref{sec:uniqueness} we give the uniqueness result in case of
piecewise constant $a$, and in Section \ref{sec:alg} we develop a recovery algorithm based
on the level set method. We present extensive numerical experiments to illustrate the feasibility
of recovering multiple parameters in Section \ref{sec:numer}. In an appendix, we discuss the possibility of recovering
two coefficients from one boundary measurement induced by a specialized boundary excitation.
Throughout, the notation $(\,\cdot\,,\,\cdot\,)$ denotes the standard $L^2(\Om)$ inner product, and
$\langle\,\cdot\,,\,\cdot\,\rangle$ the $L^2(\pa\Om)$ inner product. For a Banach space $B$,
$C^\om(T,\infty;B)$ denotes the set of functions valued in $B$ and analytic in $t\in(T,\infty)$.
The notation $c$, with or without a subscript, denotes a generic constant which may change at each
occurrence, but it is always independent of the concerned quantities.

\section{Preliminaries}\label{sec:prelim}
In this section, we present preliminary analytical results. Let $A$ be the $L^2(\Om)$
realization of the elliptic operator $\cA$, with a domain $\mathrm{Dom}(A):=\{v\in
L^2(\Om):\cA v\in L^2(\Om), \pa_\nu v|_{\pa\Om}=0\}$.
Let $\{\la_\ell\}_{\ell\ge1}$ be a strictly increasing sequence of eigenvalues of
$A$, and denote the multiplicity of $\la_\ell$ by $m_\ell$ and
$\{\vp_{\ell,k}\}_{k=1}^{m_\ell}$ an $L^2(\Om)$ orthonormal basis of
$\ker(A-\la_\ell)$. That is, for any $\ell\in\BN$, $k=1,\dots,m_\ell$:
\begin{align}
\begin{cases}
\cA\vp_{\ell,k}=\la_\ell\vp_{\ell,k} & \mbox{in }\Om,\\
a\pa_\nu\vp_{\ell,k}=0 & \mbox{on }\pa\Om.
\end{cases}
\end{align}
The eigenvalues $\{\la_\ell\}_{\ell=1}^\infty$ are nonnegative, and the eigenfunctions
$\{\vp_{\ell,k}:k=1,\dots,m_\ell\}_{\ell=1}^\infty$ form {a complete} orthonormal basis
of $L^2(\Om)$. Note that $\la_1=0$ (and has multiplicity $1$)
and the corresponding eigenfunction $\vp_1=|\Om|^{-\frac12}$
is constant valued, where $|E|$ denotes the Lebesgue measure of a set $E$. Due to the
piecewise constancy of the coefficient $a$, $\vp_{\ell,k}$ is smooth
in $D$ and $\Om\setminus\ov D$. Moreover, it
satisfies the following transmission condition on the interface $\pa D$:
\begin{align}\label{eqn:trans-cond}
\vp_{\ell,k}|_-=\vp_{\ell,k}|_+\quad\mbox{and}\quad\pa_n\vp_{\ell,k}|_-=(1+\mu)\pa_n\vp_{\ell,k}|_+\quad\mbox{on }\pa D,
\end{align}
where $\vp_{\ell,k}|_+$ and $\vp_{\ell,k}|_-$ denote the limits from $D$ and $\Om\setminus\ov D$
to the interface $\pa D$, respectively, and $\pa_n\vp_{\ell,k}|_\pm$ denotes
the derivative with respect to the unit outer normal vector $n$ on $\pa D$.
Then we define the fractional power $A^s$ ($s\ge0$) via functional calculus by
\[
A^s v:=\sum_{\ell=1}^\infty\la_\ell^s\sum_{k=1}^{m_\ell}(v,\vp_{\ell,k})\vp_{\ell,k},
\]
with a domain $\mathrm{Dom}(A^s)=\{v\in L^2(\Om):A^s v\in L^2(\Om)\}$, and the associated graph norm
$$\|v\|_{{\rm Dom}(A^s)}=\Big(\sum_{\ell=1}^\infty \la_\ell^{2s}\sum_{k=1}^{m_\ell}(v,\vp_{\ell,k})^2\Big)^\frac{1}{2}.$$

We use extensively the Mittag-Leffler function $E_{\al,\be}(z)$ defined by
(\cite[pp. 40-45]{KilbasSrivastavaTrujillo:2006}, \cite[Section 3.1]{Jin:book2021})
\[
E_{\al,\be}(z)=\sum_{k=0}^\infty\f{z^{k}}{\Ga(k\al+\be)},\quad\forall z\in\BC.
\]
The function $E_{\al,\be}(z)$ generalizes the exponential function
$\e^z$. The following decay estimate of $E_{\al,\be}(z)$ is crucial in the analysis
below; See e.g., \cite[eq. (1.8.28), p. 43]{KilbasSrivastavaTrujillo:2006} and
\cite[Theorem 3.2]{Jin:book2021} for the proof.

\begin{lemma}\label{lem:ML-asymp}
Let $\al\in(0,2)$, $\be\in\BR$, $\vp\in(\frac{\al}{2}\pi,\min(\pi,\al\pi))$ and $N\in\BN$.
Then for $\varphi\le|\arg z|\le\pi$ with $|z|\to\infty$, there holds
$$
E_{\al,\be}(z)=-\sum_{k=1}^N\f{z^{-k}}{\Ga(\be-\al k)}+O(|z|^{-N-1}).
$$
\end{lemma}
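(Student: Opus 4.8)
The plan is to derive the standard Hankel-type contour representation of $E_{\al,\be}$ and then read off the expansion by expanding the Cauchy kernel in powers of $z^{-1}$. Concretely, I would start from Hankel's loop integral for the reciprocal Gamma function, $\f1{\Ga(s)}=\f1{2\pi i}\int_{\mathcal H}\e^{u}u^{-s}\,\d u$, valid for every $s\in\BC$, insert it termwise into the defining series of $E_{\al,\be}$, and interchange summation and integration. Summing the resulting geometric series $\sum_{k\ge0}(z\,u^{-\al})^{k}$ then gives
\[
E_{\al,\be}(z)=\f1{2\pi i}\int_{\ga(\epsilon;\de)}\f{\e^{u}u^{\al-\be}}{u^\al-z}\,\d u,
\]
on a Hankel contour $\ga(\epsilon;\de)$ made up of the arc $\{\epsilon\e^{i\te}:|\te|\le\de\}$ and the two rays $\{r\e^{\pm i\de}:r\ge\epsilon\}$, oriented by increasing $\arg u$. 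The geometric series converges on $\ga(\epsilon;\de)$ only when $|z|<\epsilon^\al$, so the representation is first obtained for small $z$ and then extended to all $z$ with $|\arg z|>\al\de$ by analytic continuation; justifying this continuation and the termwise interchange is routine but must be done with some care. The opening half-angle $\de$ is taken in $(\tfrac\pi2,\tfrac{\vp}{\al})$, a nonempty interval precisely because $\vp>\tfrac\al2\pi$: the bound $\de>\tfrac\pi2$ gives $\operatorname{Re}u<0$ on the rays (hence convergence), while $\al\de<\vp$ supplies the angular separation used below; the upper bound $\vp<\min(\pi,\al\pi)$ further lets one keep $\de<\pi$ and $\al\de<\pi$, so the contour avoids the branch cut of $u^{\al-\be}$.

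Next I would expand the Cauchy kernel, for $|u^\al|$ small relative to $|z|$, as
\[
\f1{u^\al-z}=-\sum_{k=1}^{N}\f{u^{\al(k-1)}}{z^{k}}+\f{u^{\al N}}{z^{N}(u^\al-z)},
\]
multiply by $\f1{2\pi i}\e^{u}u^{\al-\be}$, and integrate over $\ga(\epsilon;\de)$ term by term. For each $k$, Hankel's formula again gives $\f1{2\pi i}\int_{\ga}\e^{u}u^{\al k-\be}\,\d u=\f1{\Ga(\be-\al k)}$, so the first $N$ terms reproduce exactly $-\sum_{k=1}^{N}z^{-k}/\Ga(\be-\al k)$, and it remains to control the remainder
\[
R_N(z)=\f1{2\pi i\,z^{N}}\int_{\ga(\epsilon;\de)}\f{\e^{u}u^{\al N+\al-\be}}{u^\al-z}\,\d u.
\]

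The main obstacle is the estimate $R_N(z)=O(|z|^{-N-1})$, i.e.\ showing the integral above is $O(|z|^{-1})$. The key point is the uniform lower bound $|u^\al-z|\ge c\,|z|$ on the contour: since $|\arg u^\al|\le\al\de<\vp\le|\arg z|$, the points $u^\al$ and $z$ are angularly separated by at least $\vp-\al\de>0$, whence $|u^\al-z|\ge|z|\min(1,\sin(\vp-\al\de))$. This is exactly where the hypotheses $\vp>\tfrac\al2\pi$ and $\vp\le|\arg z|\le\pi$ enter. With this bound,
\[
|R_N(z)|\le\f{c}{|z|^{N+1}}\int_{\ga(\epsilon;\de)}|\e^{u}|\,|u|^{\al N+\al-\be}\,|\d u|,
\]
and the remaining integral is finite: the bounded arc contributes a constant, while on the rays $|\e^{u}|=\e^{r\cos\de}$ decays exponentially because $\cos\de<0$, so $\int_\epsilon^\infty\e^{r\cos\de}r^{\al N+\al-\be}\,\d r$ converges. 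Combining the $N$ leading terms with this estimate yields the claimed expansion. I expect the genuinely delicate points to be the contour deformation and the interchange of sum and integral; once the angle $\de$ is fixed as above, the separation estimate and the remainder bound are elementary.
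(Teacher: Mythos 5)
Your argument is correct and coincides with the standard proof of this expansion: the paper does not prove the lemma itself but cites \cite[eq.\ (1.8.28)]{KilbasSrivastavaTrujillo:2006} and \cite[Theorem 3.2]{Jin:book2021}, both of which use exactly this Hankel-contour representation, the finite geometric expansion of the Cauchy kernel with Hankel's formula for $1/\Ga(\be-\al k)$ applied termwise, and the angular-separation bound $|u^\al-z|\ge c|z|$ for the remainder. Your choice of opening half-angle $\de\in(\tfrac\pi2,\tfrac{\vp}{\al})$ (nonempty precisely by the hypotheses on $\vp$) and the resulting estimate are handled correctly, so there is nothing to add.
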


By linearity, we may split the solution $u$ of problem \eqref{eqn:fde} into $u=u_i+u_b$, with $u_i$ and $u_b$ solving
\begin{align}\label{eqn:PDE-uib}
\begin{cases}
\pa_t^\al u_i+\cA u_i=f & \mbox{in }\Om\times(0,T],\\
a\pa_\nu u_i=0 & \mbox{on }\pa\Om\times(0,T],\\
u_i(0)=u_0 & \mbox{in }\Om
\end{cases}\quad\mbox{and}\quad\begin{cases}
\pa_t^\al u_b+\cA u_b=0 & \mbox{in }\Om\times(0,T],\\
a\pa_\nu u_b=g & \mbox{on }\pa\Om\times(0,T],\\
u_b(0)=0 & \mbox{in }\Om,
\end{cases}
\end{align}
respectively. The following result gives the representations of $u_i$ and $u_b$.

\begin{proposition}\label{reps of ui}
Let $u_0,f\in L^2(\Om)$. Then there exist unique solutions
$u_i,u_b\in L^2(0,T;H^1(\Om))$ that can be respectively represented by
\begin{align*}
u_i(t) & =(u_0,\vp_1){\vp_1}+\f{(f,\vp_1){\vp_1}t^\al}{\Ga(1+\al)}\\
 &\quad +\sum_{\ell=2}^\infty\sum_{k=1}^{m_\ell}\left(\left[(u_0,\vp_{\ell,k})-\la_\ell^{-1}(f,\vp_{\ell,k})\right]E_{\al,1}(-\la_\ell t^\al)+\la_\ell^{-1}(f,\vp_{\ell,k})\right)\vp_{\ell,k},\\
u_b(t) & =\sum_{\ell=1}^\infty\sum_{k=1}^{m_\ell}\int_0^t(t-s)^{\al-1}E_{\al,\al}(-\la_{\ell,k}(t-s)^\al)\langle g(s),\vp_{\ell,k}\rangle\,\d s\,\vp_{\ell,k}.
\end{align*}
Hence, the solution $u$ to problem \eqref{eqn:fde} can be represented as
\[
u(t)=\rho_0+\rho_1t^\al+\sum_{\ell=2}^\infty E_{\al,1}(-\la_\ell t^\al)\rho_\ell+\sum_{\ell=1}^\infty\int_0^t(t-s)^{\al-1}E_{\al,\al}(-\la_\ell(t-s)^\al)\sum_{k=1}^{m_\ell}\langle g(s),\vp_{\ell,k}\rangle\,\d s\,\vp_{\ell,k},
\]
with $\rho_\ell$ given by
\begin{align}\label{eqn:varrho}
\rho_\ell:=\left\{\!\begin{alignedat}{2}
& (u_0,\vp_1)\vp_1+\sum_{\ell=2}^\infty\sum_{k=1}^{m_\ell}\la_\ell^{-1}(f,\vp_{\ell,k})\vp_{\ell,k}, & \quad &\ell=0,\\
&\f{(f,\vp_1)}{\Ga(1+\al)}\vp_1, & \quad &\ell=1,\\
&\sum_{k=1}^{m_\ell}\left[(u_0,\vp_{\ell,k})-\la_\ell^{-1}(f,\vp_{\ell,k})\right]\vp_{\ell,k}, & \quad &\ell=2,3,\dots.
\end{alignedat}\right.
\end{align}
\end{proposition}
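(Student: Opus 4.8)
\section*{Proof proposal}

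The plan is to obtain both representations by spectral decomposition in the
orthonormal eigenbasis $\{\vp_{\ell,k}\}$ of $A$, which reduces the fractional PDEs
in \eqref{eqn:PDE-uib} to decoupled scalar fractional-order ODEs whose closed forms
are expressible through Mittag-Leffler functions. Existence, uniqueness and the
asserted $L^2(0,T;H^1\II)$ regularity then follow from convergence estimates on the
resulting series, and the final representation of $u$ is pure algebraic bookkeeping.

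First I would treat $u_i$. Writing $u_i(t)=\sum_{\ell,k}c_{\ell,k}(t)\vp_{\ell,k}$ and
projecting the governing equation onto $\vp_{\ell,k}$, the homogeneous Neumann condition
$a\pa_\nu u_i=0$ makes the boundary terms in Green's identity vanish, so each coefficient
solves the scalar problem
\[
\pa_t^\al c_{\ell,k}+\la_\ell c_{\ell,k}=(f,\vp_{\ell,k}),\qquad c_{\ell,k}(0)=(u_0,\vp_{\ell,k}),
\]
where the right-hand side is constant in $t$ because $f$ is time-independent. For
$\la_\ell>0$ ($\ell\ge2$) the solution is the superposition of the homogeneous part
$(u_0,\vp_{\ell,k})E_{\al,1}(-\la_\ell t^\al)$ and the particular part for a constant
source, which via the identity $\la t^\al E_{\al,\al+1}(-\la t^\al)=1-E_{\al,1}(-\la t^\al)$
equals $\la_\ell^{-1}(f,\vp_{\ell,k})\big(1-E_{\al,1}(-\la_\ell t^\al)\big)$; collecting
these reproduces the stated coefficient. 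The degenerate mode $\la_1=0$ must be handled
separately: there $\pa_t^\al c_1=(f,\vp_1)$ integrates to
$c_1(t)=(u_0,\vp_1)+(f,\vp_1)t^\al/\Ga(1+\al)$, which yields the two leading terms.

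For $u_b$ I would use the weak form. Testing $\pa_t^\al u_b+\cA u_b=0$ against $\vp_{\ell,k}$
and applying Green's identity, the interior term contributes $\la_\ell(u_b,\vp_{\ell,k})$,
while the boundary contributions collapse --- using $a\pa_\nu\vp_{\ell,k}=0$ and
$a\pa_\nu u_b=g$ --- to $\langle g,\vp_{\ell,k}\rangle$, so that
\[
\pa_t^\al(u_b,\vp_{\ell,k})+\la_\ell(u_b,\vp_{\ell,k})=\langle g,\vp_{\ell,k}\rangle,\qquad
(u_b,\vp_{\ell,k})(0)=0.
\]
The Duhamel formula for this inhomogeneous fractional ODE with vanishing initial data gives
exactly the convolution against $(t-s)^{\al-1}E_{\al,\al}(-\la_\ell(t-s)^\al)$ appearing in
the claimed expansion.

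It then remains to justify that these formal series define genuine solutions with the
asserted regularity and that the solutions are unique. Uniqueness is the easier part: a
difference of two solutions solves the homogeneous problem with vanishing data, and
projecting onto each $\vp_{\ell,k}$ forces every coefficient to vanish. The main obstacle
is the regularity and convergence, especially for $u_b$: since the excitation only lies in
$H^{\frac12}(\pa\Om)$, the boundary coefficients $\langle g,\vp_{\ell,k}\rangle$ must be
controlled through trace estimates for the eigenfunctions $\vp_{\ell,k}$, and then combined
with the boundedness and decay of $E_{\al,\al}$ from Lemma \ref{lem:ML-asymp} (together with
the Weyl-type growth of $\la_\ell$) to show that the series and its spatial gradient converge
in $L^2(0,T;H^1\II)$; the low boundary regularity and the weak $(t-s)^{\al-1}$ kernel
singularity are what make this estimate delicate. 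Finally, regrouping from the $u_i$
expansion the time-independent and $t^\al$ parts into $\rho_0$ and $\rho_1$ and the remaining
decaying modes into $\rho_\ell$ ($\ell\ge2$), and adding $u_b$, yields the stated
representation of $u$.
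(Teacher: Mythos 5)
Your proposal is correct and follows essentially the same route as the paper: project onto the eigenbasis of $A$, reduce to scalar fractional ODEs solved by Mittag-Leffler functions (treating the $\la_1=0$ mode separately), and regroup into the $\rho_\ell$. The one point the paper takes pains over and you gloss past is the justification of $(\cA u_i,\vp_{\ell,k})=\la_\ell(u_i,\vp_{\ell,k})$ and of the boundary term $\langle g,\vp_{\ell,k}\rangle$: because $a$ is only piecewise constant, the double integration by parts must be done separately on $D$ and $\Om\setminus\ov D$, with the interface terms on $\pa D$ cancelling via the transmission conditions \eqref{eqn:trans-cond}, not merely via the Neumann conditions on $\pa\Om$.
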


\begin{proof}
The representations follow from the standard separation of variables technique
(\cite{SakamotoYamamoto:2011}, \cite[Section 6.2]{Jin:book2021}).
The piecewise constancy of the diffusivity $a$ requires special
care due to a lack of global regularity. By multiplying
the governing equation of $u_i$ by $\vp_{\ell,k}$ and then integrating over
 $\Om$, we get
\[
\pa_t^\al(u_i(t),\vp_{\ell,k})+(\cA u_i(t),\vp_{\ell,k})=(f,\vp_{\ell,k}).
\]
Integrating by parts twice and using the transmission condition \eqref{eqn:trans-cond}
for $\vp_{\ell,k}$ (and $u_i$) on $\pa D$ gives
\begin{align*}
(\cA u_i(t),\vp_{\ell,k}) & =-\int_{\Om\setminus\ov D}\nb\cdot (\nb u_i)\,\vp_{\ell,k}\,\d x-\int_D\nb\cdot ((1+\mu )\nb u_i)\,\vp_{\ell,k}\,\d x\\
& =-\int_{\pa\Om}(\nb u_i\cdot\nu)\,\vp_{\ell,k}\,\d S-\int_{\pa D}(\nb u_i\cdot n_-)\,\vp_{\ell,k}|_-\,\d S+\int_{\Om\setminus\ov D}\nb u_i\cdot\nb\vp_{\ell,k}\,\d x\\
& \quad\,-\int_{\pa D}(1+\mu)(\nb u_i\cdot n_+)\,\vp_{\ell,k}|_+\,\d S+\int_D(1+\mu)\nb u_i\cdot\nb\vp_{\ell,k}\,\d x\\
& =\int_{\Om\setminus\ov D}\nb u_i\cdot\nb\vp_{\ell,k}\,\d x+\int_D(1+\mu )\nb u_i\cdot\nb\vp_{\ell,k}\,\d x\\
& =\int_{\pa\Om}(\nb\vp_{\ell,k}\cdot\nu)\,u_i\,\d S+\int_{\pa D}(\nb\vp_{\ell,k}\cdot n_-)\,u_i|_-\,\d S-\int_{\Om\setminus\ov D}\nb\cdot(\nb\vp_{\ell,k})\,u_i\,\d x\\
& \quad\,+\int_{\pa D}(1+\mu)(\nb\vp_{\ell,k}\cdot n_+)\,u_i|_+\,\d S-\int_D \nb\cdot((1+\mu)\nb\vp_{\ell,k})\,u_i\,\d x\\
& =(u_i,\cA\vp_{\ell,k})=\la_\ell(u_i,\vp_{\ell,k}).
\end{align*}
Hence, the scalar function $u_i^{\ell,k}(t):=(u_i(t),\vp_{\ell,k})$ satisfies the following initial value problem for a fractional ordinary differential equation:
\[
(\pa_t^\al+\la_\ell)u_i^{\ell,k}(t)=f_{\ell,k}:=(f,\vp_{\ell,k})\qquad\text{for}~ 0<t\le T,\quad\text{with}~
u_i^{\ell,k}(0)=u_0^{\ell,k}:=(u_0,\vp_{\ell,k}).
\]
Then $u_i^{\ell,k}(t)$ is given by \cite[Proposition 4.5]{Jin:book2021}
\[
u_i^{\ell,k}(t)=u_0^{\ell,k}E_{\al,1}(-\la_\ell t^\al)+f_{\ell,k}\int_0^t s^{\al-1}E_{\al,\al}(-\la_\ell s^\al)\,\d s.
\]
Note that $u_i^1=u_0^1+\f1{\Ga(1+\al)}f_1t^\al$.
Now using the identity
\begin{align}\label{eqn:mlf-diff}
\f\d{\d t}E_{\al,1}(-\la t^\al)=-\la\, t^{\al-1}E_{\al,\al}(-\la t^\al),
\end{align}
we have for $\ell\ge2$ and $k=1,\dots,m_\ell$ that
\begin{align*}
u_i^{\ell,k}(t)&=u_0^{\ell,k}E_{\al,1}(-\la_\ell t^\al)+\la_\ell^{-1}\left[1-E_{\al,1}(-\la_\ell t^\al)\right]f_{\ell,k}\\
&=\left(u_0^{\ell,k}-\la_\ell^{-1}f_{\ell,k}\right)E_{\al,1}(-\la_\ell t^\al)+\la_\ell^{-1}f_{\ell,k}.
\end{align*}
This gives the representation of $u_i$. Similarly, multiplying
the governing equation for $u_b$ by $\vp_{\ell,k}$ and integrating over $\Om$ give
$\pa_t^\al(u_b(t),\vp_{\ell,k})+(\cA u_b(t),\vp_{\ell,k})=0$.
Repeating the argument yields that {$u_b^{\ell,k}(t):=(u_b(t),\vp_{\ell,k})$ satisfies
\[
(\pa_t^\al+\la_\ell)u_b^{\ell,k}(t)=\langle g(t),\vp_{\ell,k}\rangle\quad \text{for}~0<t\le T,\qquad \text{with}~~
u_b^{\ell,k}(0)=0.
\]
The solution $u_b^{\ell,k}(t)$} is given by \cite[Proposition 4.5]{Jin:book2021}
$$
u_b^{\ell,k}(t)=\int_0^t(t-s)^{\al-1}E_{\al,\al}(-\la_\ell(t-s)^\al)\langle g(s),\vp_{\ell,k}\rangle\,\d s\,\vp_{\ell,k}.
$$
Thus the desired assertion follows. The representation of the solution $u$ to problem
\eqref{eqn:fde} follows directly from that of $u_b$ and $u_i$, and the identity \eqref{eqn:mlf-diff}.
\end{proof}

Next we show properties of the boundary data $h$. This is achieved by first
proving related properties of $u$ and then applying the trace theorem. Below we study the
analyticity of
\begin{align*}
u_i{ (t)}&=\rho_0+\rho_1t^\al+\sum_{\ell=2}^\infty E_{\al,1}(-\la_\ell t^\al)\rho_\ell,\\
u_b{ (t)}&=\sum_{\ell=1}^\infty\sum_{k=1}^{m_\ell}\int_0^t(t-s)^{\al-1}E_{\al,\al}(-\la_\ell(t-s)^\al)\langle g(s),\vp_{\ell,k}\rangle\,\d s\,\vp_{\ell,k}.
\end{align*}
Since our focus is the trace on $\pa\Om$, we only study $u$ on the subdomain $\Om\setminus\ov D$.
Recall that for a Banach space $B$, the notation $C^\om(T,\infty;B)$ denotes the set
of functions valued in $B$ and analytic in $t\in(T,\infty)$.
\begin{proposition}\label{analytic of u}
Let $D'\supset D$ be a small neighborhood of $D$ with a smooth boundary and denote $\Om'=\Om\setminus\ov{D'}$.  For $u_0\in L^2(\Om)$, $f\in L^2(\Om)$ and $g$ as in \eqref{eq:g},  the following statements hold.
\begin{enumerate}
\item[(i)] 
$u_i\in C^\om(0,\infty;H^2(\Om'))$ and $u_b\in C^\om(T_1+\ve,\infty;H^2(\Om'))$ {for arbitrarily fixed $\ve>0$}.
\item[(ii)] The Laplace transforms {$\wh u_i(z)$ and $\wh u_b(z)$ of $u_i$ and $u_b$ in $t$} exist for all $ \Re(z)>0$
 and are respectively given by
\[
\wh u_i(z)=z^{-1}\rho_0+\Ga(\al+1)z^{-\al-1}\rho_1+\sum_{\ell=2}^\infty\f{\rho_\ell z^{\al-1}}{z^\al +\la_\ell}\quad\mbox{and}\quad\wh u_b(z)=\sum_{\ell=1}^\infty\sum_{k=1}^{m_\ell}\f{\langle\wh g(z),\vp_{\ell,k}\rangle\vp_{\ell,k}}{z^\al+\la_\ell}.
\]
\end{enumerate}
\end{proposition}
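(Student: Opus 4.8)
The plan is to settle (ii) first by a termwise Laplace transform, and then to obtain the $H^2(\Om')$-valued analyticity in (i) by reducing it, via interior elliptic regularity, to analyticity valued merely in $L^2$ on an interior subdomain. Write $u_0^{\ell,k}=(u_0,\vp_{\ell,k})$, $f_{\ell,k}=(f,\vp_{\ell,k})$ and $\eta_{\ell,k}=\langle\eta,\vp_{\ell,k}\rangle$, and fix an intermediate domain $\Om''$ with $\Om'\subset\Om''$, $\ov{\Om''}\cap\ov D=\emptyset$ and $\pa\Om\subset\pa\Om''$, so that $a\equiv1$ and $\cA=-\Delta$ on $\ov{\Om''}$. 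The guiding observation is that, because $u_b$ carries the inhomogeneous Neumann datum $g$, one has $u_b(t)\notin\mathrm{Dom}(A)$; hence $\cA u_b$ may not be computed by inserting $\la_\ell$ termwise, and a direct bound through the graph norm of $A$ is unavailable. I therefore avoid $\mathrm{Dom}(A)$ altogether.

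For part (ii) I would transform the series of Proposition \ref{reps of ui} term by term, using $\mathcal L[t^\al](z)=\Ga(\al+1)z^{-\al-1}$, $\mathcal L[E_{\al,1}(-\la t^\al)](z)=z^{\al-1}/(z^\al+\la)$ and $\mathcal L[t^{\al-1}E_{\al,\al}(-\la t^\al)](z)=1/(z^\al+\la)$, the last one together with the convolution theorem for $u_b$. The interchange of sum and integral is justified by Fubini and the decay $|E_{\al,\be}(-\la_\ell t^\al)|\le c/(1+\la_\ell t^\al)$ from Lemma \ref{lem:ML-asymp}. For $\Re(z)>0$ the power $z^\al$ lies in the sector $|\arg(z^\al)|<\al\pi/2<\pi/2$, whence $\Re(z^\al)>0$ and $|z^\al+\la_\ell|\ge\la_\ell$; this makes both resulting series converge and shows the transforms exist on the entire right half-plane.

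For part (i) I would complexify $t$ into a sector $\Sigma_\te=\{t:|\arg t|<\te\}$ with $\te>0$ so small that $-\la_\ell t^\al$ stays in the decay sector of Lemma \ref{lem:ML-asymp}; there $|E_{\al,\be}(-\la_\ell t^\al)|\le c/(1+\la_\ell|t|^\al)$ and every term of the series is analytic in $t$. For $u_i$ the Caputo derivative is $\pa_t^\al u_i=f-\cA u_i$, and termwise $\pa_t^\al u_i^{\ell,k}=-(\la_\ell u_0^{\ell,k}-f_{\ell,k})E_{\al,1}(-\la_\ell t^\al)$; using $\la_\ell/(1+\la_\ell|t|^\al)\le|t|^{-\al}$ gives $\sum_{\ell,k}|\pa_t^\al u_i^{\ell,k}|^2\le c|t|^{-2\al}\|u_0\|_{L^2(\Om)}^2+c\|f\|_{L^2(\Om)}^2$, uniformly on compact subsets of $\Sigma_\te$ with $|t|\ge\de$. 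Thus $u_i$ and $\pa_t^\al u_i$ are analytic valued in $L^2(\Om'')$ on $(0,\infty)$, while the extra terms $\rho_0$ and $\rho_1t^\al$ are trivially analytic there.

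The delicate case is $u_b$ for $t>T_1+\ve$, where $\psi\equiv1$. I would split the Duhamel integral at the transition: the part over $\{s>T_1\}$ integrates in closed form, by \eqref{eqn:mlf-diff}, to $\la_\ell^{-1}(1-E_{\al,1}(-\la_\ell(t-T_1)^\al))\eta_{\ell,k}$, and the part over $[T_0,T_1]$ is a finite integral whose kernel is analytic in $t$ since $t-s\ge\ve$. Writing $\pa_t^\al u_b^{\ell,k}=-\la_\ell u_b^{\ell,k}+\eta_{\ell,k}$ and integrating by parts in $s$ with the help of \eqref{eqn:mlf-diff}, both $u_b^{\ell,k}$ and $\pa_t^\al u_b^{\ell,k}$ are seen to be $O(\eta_{\ell,k}/\la_\ell)$ uniformly for $t>T_1+\ve$; since $\eta\in H^{\frac12}(\pa\Om)$ yields $\sum_{\ell,k}\la_\ell^{-1}|\eta_{\ell,k}|^2<\infty$, both series converge in $L^2(\Om'')$ uniformly on compact subsets of the sector about $(T_1+\ve,\infty)$, hence are analytic there. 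Finally, on $\Om''$ one has $-\Delta u=F$ with $F=f-\pa_t^\al u_i$ (resp. $F=-\pa_t^\al u_b$) in $L^2(\Om'')$ and Neumann datum $0$ (resp. $\eta\in H^{\frac12}(\pa\Om)$) on $\pa\Om$; the interior-plus-Neumann estimate $\|u\|_{H^2(\Om')}\le c(\|F\|_{L^2(\Om'')}+\|u\|_{L^2(\Om'')}+\|\pa_\nu u\|_{H^{1/2}(\pa\Om)})$ is linear and independent of $t$, so applying it to the $t$-difference quotients of $u$ promotes the $L^2(\Om'')$-analyticity to analyticity valued in $H^2(\Om')$. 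The main obstacle is exactly the loss of the homogeneous boundary condition for $u_b$; the gain of a factor $\la_\ell^{-1}$ in $\pa_t^\al u_b$, which surfaces only after using the closed form for $s>T_1$ and integrating by parts, is what restores $L^2$-summability and forces the exclusion of a neighborhood of $t=T_1$.
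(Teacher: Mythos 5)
Your proof is correct, but it takes a genuinely different route from the paper's for part (i). The paper works with the spectral graph norm throughout: it bounds $\|u_i(z)\|_{\mathrm{Dom}(A)}$ directly on a sector, and for $u_b$ it integrates by parts in the Duhamel integral against $\psi'$ (supported in $[T_0,T_1]$) to split $u_b^{\ell,k}=u_{b,1}^{\ell,k}+u_{b,2}^{\ell,k}$, where $u_{b,1}^{\ell,k}=\la_\ell^{-1}\langle\eta,\vp_{\ell,k}\rangle\psi(t)$ reassembles into $\psi(t)$ times the (mean-free part of the) elliptic lift $U$ solving $\cA U=0$, $a\pa_\nu U=\eta$ --- this is the piece carrying the inhomogeneous Neumann datum, handled by elliptic regularity of $U$ --- while $\sum_{\ell,k} u_{b,2}^{\ell,k}\vp_{\ell,k}$ genuinely lies in $\mathrm{Dom}(A)$ and is bounded in the graph norm by $c|z-T_1|^{-2\al}$ via $\sum_{\ell,k}\la_\ell^{-2}|\langle\eta,\vp_{\ell,k}\rangle|^2\le\|U\|_{L^2(\Om)}^2$; the $H^2(\Om')$ statement then follows from $\mathrm{Dom}(A)\hookrightarrow H^2(\Om')$. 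You instead split the Duhamel integral at $s=T_1$ (which produces the same gain of a factor $\la_\ell^{-1}$), stay entirely at the $L^2$ level for $u$ and $\pa_t^\al u$, and recover $H^2(\Om')$ only at the end by a local elliptic estimate on the intermediate domain $\Om''$ applied to $t$-difference quotients. Your opening observation that $u_b(t)\notin\mathrm{Dom}(A)$ is exactly the difficulty the paper's splitting is designed to circumvent; your route circumvents it differently and arguably more transparently, since the inhomogeneous boundary datum enters only through the linear trace term of the elliptic estimate. The prices are comparable: you still need the lifting fact $\sum_{\ell,k}\la_\ell^{-1}|\langle\eta,\vp_{\ell,k}\rangle|^2<\infty$ (equivalently $\langle\eta,\vp_{\ell,k}\rangle=\la_\ell(U,\vp_{\ell,k})$ with $U\in H^1(\Om)$), which is the same auxiliary elliptic problem the paper writes down, and the up-to-the-boundary $H^2$ estimate with $H^{1/2}$ Neumann data on the piecewise $C^{1,1}$ boundary that the paper also invokes for $U|_{\Om'}$. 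Part (ii) you handle essentially as the paper does, by termwise Laplace transform justified through the Mittag--Leffler decay and dominated convergence.
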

\begin{proof}
Throughout this proof, let $\ve>0$ be arbitrarily fixed.
Since $\la_1=0$, by Lemma \ref{lem:ML-asymp}, there exist constants $c>0$
and $\te\in(0,\frac{\pi}{2})$ such that for any $z\in\Si_\te:=\{z\in\BC\setminus\{0\}:|\arg(z)|\le\te\}$, we have
\begin{align*}
&\quad \|u_i(z)\|_{{\rm Dom}(A)}^2 =\sum_{n=1}^\infty\la_n^2\sum_{j=1}^{m_n}\left(\vp_{n,j},\rho_0+\rho_1 z^\al+\sum_{\ell=2}^\infty E_{\al,1}(-\la_\ell z^\al)\rho_\ell\right)^2\\
& =\sum_{n=2}^\infty\la_n^2\sum_{j=1}^{m_n}\left(\vp_{n,j},\sum_{\ell=2}^\infty\sum_{k=1}^{m_\ell}\left\{E_{\al,1}(-\la_\ell z^\al)\left[(u_0,\vp_{\ell,k})-\la_\ell^{-1}(f,\vp_{\ell,k})\right]{+\la_\ell^{-1}(f,\vp_{\ell,k})}\right\}\vp_{\ell,k}\right)^2\\
& \le c\sum_{n=2}^\infty\la_n^2E_{\al,1}(-\la_n z^\al)^2\sum_{j=1}^{m_n}\left\{(u_0,\vp_{n,j})^2+\la_n^{-2}(f,\vp_{n,j})^2\right\}{+c\sum_{n=1}^\infty\sum_{j=1}^{m_n}(f,\vp_{n,j})^2}\\
& \le c|z|^{-2\al}\sum_{n=2}^\infty\sum_{j=1}^{m_n}\left\{(u_0,\vp_{n,j})^2+\la_n^{-2}(f,\vp_{n,j})^2\right\}{+ c\|f\|_{L^2(\Om)}^2}\\
& \le c|z|^{-2\al}\big(\| u_0\|_{L^2(\Om)}^2+\| f\|_{L^2(\Om)}^2\big){+c\|f\|_{L^2(\Om)}^2}.
\end{align*}
Since $u_0,f\in L^2(\Om)$, $\| u_i(z)\|_{\mathrm{Dom}(A)}^2$ is uniformly bounded for $z\in\Si_\te$. Since $E_{\al,1}(-\la_n z^\al)$ is analytic in $z\in\Si_\te$ and the series converges  uniformly in any compact subset of $\Si_\te$, $u_i(t)$ is analytic in $t\in(0,\infty)$ as a ${\rm Dom}(A)$-valued function, i.e., $u_i\in C^\om(0,\infty;\mathrm{Dom}(A))$. By Sobolev embedding, $u_i\in C^\om(0,\infty;H^2(\Om'))$.

Next we prove the analyticity of $u_b$. By the choice $g(x,t)=\eta(x)\psi(t)$ in \eqref{eq:g}
and integration by parts, for $t>T_1$, $u_b^1(t):=(u_b(t),\vp_1)$ is given by
\begin{align*}
	u_b^1(t)&=\f1{\Ga(\al)}\int_0^t(t-s)^{\al-1}\langle g(s),\vp_1\rangle\,\d s=
     \f{\langle \eta,\vp_1\rangle}{\Ga(\al)}\int_0^t(t-s)^{\al-1} \psi(s)\,\d s \\
     &= \f{\langle \eta,\vp_1\rangle}{\al\,\Ga(\al)}\left[ -(t-s)^{\al}\psi(s)|_{s=0}^{s=t} + \int_0^t(t-s)^{\al}\psi'(s)\,\d s\right]\\
	&=\f{\langle\eta,\vp_1\rangle}{\Ga(\al+1)}\int_{T_0}^{T_1}(t-s)^\al\psi'(s)\,\d s,
\end{align*}
where the last step follows from the condition on $\psi$ in \eqref{eq:g-2}. Thus the time-analyticity of $u_b^1(t)\vp_1$ for $t\in(T_1+\ve,\infty)$ follows. Next, again by
integration by parts, \eqref{eq:g}--\eqref{eq:g-2} and the identity \eqref{eqn:mlf-diff}, for $t>T_1$, $u_b^{\ell,k}(t):=(u_b(t),\vp_{\ell,k})$ with $\ell
\ge2$, $k=1,\dots,m_\ell$ can be written as
\begin{align*}
u_b^{\ell,k}(t) & =\int_0^t(t-s)^{\al-1}E_{\al,\al}(-\la_\ell(t-s)^\al)\langle g(s),\vp_{\ell,k}\rangle\,\d s\\
&=\int_0^t\f{\langle g(s),\vp_{\ell,k}\rangle}{\la_\ell}\f\d{\d s}E_{\al,1}(-\la_\ell(t-s)^\al)\,\d s\\
& =\la_\ell^{-1}\Big[\langle g(s),\vp_{\ell,k}\rangle E_{\al,1}(-\la_\ell(t-s)^\al)\Big]_{s=0}^{s=t}-\f{\langle\eta,\vp_{\ell,k}\rangle}{\la_\ell}\int_0^tE_{\al,1}(-\la_\ell(t-s)^\al)\psi'(s)\,\d s\\
& =\f{\langle\eta,\vp_{\ell,k}\rangle}{\la_\ell}\psi(t)-\f{\langle\eta,\vp_{\ell,k}\rangle}{\la_\ell}\int_{T_0}^{T_1}E_{\al,1}(-\la_\ell (t-s)^\al)\psi'(s)\,\d s=:u_{b,1}^{\ell,k}(t)+u_{b,2}^{\ell,k}(t).
\end{align*}
Since $\psi(t)=1$ for $t>T_1$, we see that $u_{b,1}^{\ell,k}(t)$ is a constant for $t>T_1$.  Next we consider the following boundary value problem
\begin{align}\label{eq:U}
\cA U=0~~  \mbox{in }\Om,\quad \text{with}\quad
a\pa_\nu U=\eta  ~~\mbox{on }\pa\Om.
\end{align}
The compatibility condition $\langle\eta,1\rangle=0$ implies that there exist solutions to problem \eqref{eq:U}.
We take an arbitrary solution $U$. Since $a$ is piecewise constant and $\eta\in H^{\frac{1}{2}}(\pa\Om)$, we know that $U\in H^1(\Om)$ and its restriction $U|_{\Om'}\in H^2(\Om')$. Integrating by parts twice yields
$$\langle\eta,\vp_{\ell,k}\rangle=\la_\ell(U,\vp_{\ell,k}) .$$
Similar to the argument for Proposition \ref{reps of ui}, from the transmission condition \eqref{eqn:trans-cond}, we deduce
 \[
	\sum_{\ell=2}^\infty\sum_{k=1}^{m_\ell}u_{b,1}^{\ell,k}(t)\vp_{\ell,k}=\sum_{\ell=2}^\infty\sum_{k=1}^{m_\ell}\psi(t)(U,\vp_{\ell,k})\vp_{\ell,k} ,
	\]
which is analytic in $t\in(T_1+\ve,\infty)$ since it is constant in time and
$U\in L^2(\Om)$. Moreover, by the standard elliptic regularity theory,
$$\sum_{\ell
=2}^\infty\sum_{k=1}^{m_\ell}u_{b,1}^{\ell,k}\vp_{\ell,k}\in C^\om(T_1+\ve,\infty;H^2(\Om')).$$
Recall Young's inequality for convolution, i.e., $\|f*g\|_{L^r(\BR)}
\le\|f\|_{L^p(\BR)}\|g\|_{L^q(\BR)}$ for $p,q,r\ge1$ with $p^{-1}+
q^{-1}=r^{-1}+1$ and any $f\in L^p(\BR)$ and $g\in L^q(\BR)$. Then
by Young's inequality, Lemma \ref{lem:ML-asymp} and the regularity estimate $\sum_{\ell=2}^\infty\la_\ell^{-2}\sum_{k=1}^{m_\ell}\langle\eta,\vp_{\ell,k}\rangle^2\leq\|U\|_{L^2(\Om)}<\infty$, we deduce
\begin{align*}
\left\|\sum_{\ell=2}^\infty\sum_{k=1}^{m_\ell}u_{b,2}^{\ell,k}(z)\vp_{\ell,k}\right\|_{{\rm Dom}(A)}^2 & =\sum_{n=1}^\infty\la_n^2\sum_{j=1}^{m_n}\left(\vp_{n,j},\sum_{\ell=2}^\infty\sum_{k=1}^{m_\ell}u_{b,2}^{\ell,k}(z)\vp_{\ell,k}\right)^2\\
& ={\sum_{n=2}^\infty\la_n^2\sum_{j=1}^{m_n}\left(\f{\langle\eta,\vp_{n,j}\rangle}{\la_n}\int_{T_0}^{T_1}E_{\al,1}(-\la_n(z-s)^\al)\psi'(s)\,\d s\right)^2}\\
& \le{\sum_{n=2}^\infty\sum_{j=1}^{m_n}\langle\eta,\vp_{n,j}\rangle^2\left(\f{c}{\la_n|z-T_1|^\al}\int_{T_0}^{T_1}|\psi'(s)|\,\d s\right)^2}\\
& \le{\left(\f{c\|\psi\|_{W^{1,\infty}(\BR_+)}}{|z-T_1|^\al}\right)^2\sum_{n=2}^\infty\la_n^{-2}\sum_{j=1}^{m_n}|\langle\eta,\vp_{n,j}\rangle|^2}\le\f{c}{|z-T_1|^{2\al}}.
\end{align*}
Since $u_{b,2}^{\ell,k}(t)$ is analytic in $(T_1+\ve,\infty) $ and the series
$\sum_{\ell=2}^\infty\sum_{k=1}^{m_\ell}u_{b,2}^{\ell,k}(z)\vp_{\ell,k}$ converges
uniformly in $\mathrm{Dom}(A)$ for $z\in T_1+\ve+\Si_\te$, it belongs to $C^\om(T_1+\ve,\infty;\mathrm{Dom}(A)) $, and hence $u_b\in C^\om(T_1+\ve,\infty;H^2(\Om'))$.  This proves part (i).

The argument for part (i) implies that the series converges uniformly in $\mathrm{Dom}(A)$ for $t\in(0,\infty)$, and
\[
\|\e^{-t z}u_i(t)\|_{{\rm Dom}(A)}\le c\,\e^{-t\,\Re(z)}{(t^{-\al}+1)},\quad t>0.
\]
The function $\e^{-t\,\Re(z)}(t^{-\al}+1)$ is integrable in $t$ over $(0,\infty)$ for any fixed $z$ with $\Re(z)>0$. By Lebesgue's dominated convergence theorem and taking Laplace transform termwise, we obtain
\[
\wh u_i(z)=z^{-1}\rho_0+\Ga(\al+1)z^{-\al-1}\rho_1+\sum_{\ell=2}^\infty\f{\rho_\ell z^{\al-1}}{z^\al+\la_\ell},\quad\forall\,\Re(z)>0.
\]
The argument for part (i) also implies
\[
\|\e^{-t z}u_b(t)\|_{{\rm Dom}(A) }\le c\,\e^{-t\,\Re(z)}|t-T_1|^{-\al},\quad t>0.
\]
Then termwise Laplace transform and Lebesgue's dominated convergence theorem complete the proof of the proposition.
\end{proof}

Thus, $u_i$ and $u_b$ are analytic in time and have
$H^2(\Om')$ regularity. Since $\pa\Om$ is Lipschitz and piecewise $C^{1,1}$,
their traces on $\pa\Om$ are well defined.  The next result is
direct from the trace theorem and Sobolev embedding theorem. Here, we use $x$ and $y$
denote the variables in $\Om$ and on $\pa\Om$,
respectively.

\begin{corollary}\label{reps of h}\label{analytic of h}
Let the assumptions in Proposition \ref{analytic of u} hold. Then the data $h=u|_{\Ga_0\times(0,T)}$ to problem \eqref{eqn:fde} can be represented by
\begin{align*}
h(t)=&\underbrace{\rho_0+\rho_1t^\al+\sum_{\ell=2}^\infty E_{\al,1}(-\la_\ell t^\al)\rho_\ell}_{=:h_i(t)}\\
  &+\underbrace{\sum_{\ell=1}^\infty\sum_{k=1}^{m_\ell}\int_0^t(t-s)^{\al-1}E_{\al,\al}(-\la_\ell(t-s)^\al)\langle g(s),\vp_{\ell,k}\rangle\,\d s\,\vp_{\ell,k}}_{=:h_b(t)}.
\end{align*}
Moreover, $h_i$ and $h_b$ satisfy the following properties.
\begin{enumerate}
\item[(i)] $h_i\in C^\om(0,\infty;L^2(\Ga_0))$ and $h_b\in C^\om(T_1+\ve,\infty;L^2(\Ga_0))$ for arbitrarily fixed $\ve>0$.
\item[(ii)] The Laplace transforms $\wh h_i(z)$ and $\wh h_b(z)$ of $h_i$ and $h_b$ in $t$ exist for all $ \Re(z)>0$ and are given by
\begin{align*}
\wh h_i(z)&=z^{-1}\rho_0+\Ga(\al+1)z^{-\al-1}\rho_1+\sum_{\ell=2}^\infty\f{\rho_\ell z^{\al-1}}{z^\al+\la_\ell},\\
\wh h_b(z)&=\sum_{\ell=1}^\infty\sum_{k=1}^{m_\ell}\f{\langle\wh g(z),\vp_{\ell,k}\rangle\vp_{\ell,k}}{z^\al+\la_\ell}.
\end{align*}
\end{enumerate}
\end{corollary}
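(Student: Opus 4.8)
The plan is to derive the corollary directly from Proposition \ref{analytic of u} by composing each statement there with the boundary trace operator, using that $\Ga_0\subset\pa\Om\subset\pa\Om'$ and that the solution components $u_i,u_b$ are $H^2(\Om')$-valued. The representation of $h$ itself is immediate: since $h=u|_{\Ga_0\times(0,T)}$ and $u=u_i+u_b$ with the series of Proposition \ref{reps of ui}, restricting each term to $\Ga_0$ and setting $h_i:=u_i|_{\Ga_0}$ and $h_b:=u_b|_{\Ga_0}$ yields the stated decomposition, with $\vp_{\ell,k}$ and $\rho_\ell$ now understood as their traces on $\Ga_0$.

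For part (i), I would first record that the boundary trace $\gamma\colon H^2(\Om')\to L^2(\Ga_0)$, $v\mapsto v|_{\Ga_0}$, is a bounded linear operator. Indeed, $\pa\Om'$ contains the portion $\pa\Om\supset\Ga_0$, and by the choice of $D'$ (together with $\mathrm{diam}(D)<\mathrm{dist}(D,\pa\Om)$) a full neighborhood of $\pa\Om$ lies in $\Om'$; since $\pa\Om$ is Lipschitz and piecewise $C^{1,1}$, the trace theorem gives a bounded map $H^2(\Om')\to H^{3/2}(\pa\Om)\hookrightarrow L^2(\pa\Om)$, whose restriction to $\Ga_0$ is bounded into $L^2(\Ga_0)$. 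A bounded linear operator sends a $B$-valued analytic function to an analytic function valued in the target space; hence applying $\gamma$ to $u_i\in C^\om(0,\infty;H^2(\Om'))$ and $u_b\in C^\om(T_1+\ve,\infty;H^2(\Om'))$ from Proposition \ref{analytic of u}(i) yields $h_i\in C^\om(0,\infty;L^2(\Ga_0))$ and $h_b\in C^\om(T_1+\ve,\infty;L^2(\Ga_0))$, as claimed.

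For part (ii), I would use that the Laplace transform commutes with the bounded linear operator $\gamma$. Proposition \ref{analytic of u}(ii) provides $\wh u_i(z)$ and $\wh u_b(z)$ in $\mathrm{Dom}(A)\subset H^2(\Om')$ for every $\Re(z)>0$, together with the integrable bounds on $\|\e^{-tz}u_i(t)\|_{\mathrm{Dom}(A)}$ and $\|\e^{-tz}u_b(t)\|_{\mathrm{Dom}(A)}$. Since $\gamma$ is bounded, it may be pulled inside the (Bochner) Laplace integral, giving $\wh h_i(z)=\gamma\wh u_i(z)$ and $\wh h_b(z)=\gamma\wh u_b(z)$ for all $\Re(z)>0$. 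Finally, because the defining series for $\wh u_i(z)$ and $\wh u_b(z)$ converge in $\mathrm{Dom}(A)$ and $\gamma$ is continuous into $L^2(\Ga_0)$, the trace can be applied termwise, reproducing exactly the asserted series with $\vp_{\ell,k}$ restricted to $\Ga_0$.

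I do not expect a serious obstacle, as the statement is essentially a corollary. The only point needing a little care is the mapping property of $\gamma$ on the non-smooth boundary, namely that $H^2(\Om')$ functions admit an $L^2(\Ga_0)$ trace despite $\pa\Om$ being merely Lipschitz and piecewise $C^{1,1}$; once the boundedness of $\gamma$ is secured, the transfer of analyticity, the commutation with the Laplace integral, and the termwise application to the series are all standard functional-analytic facts.
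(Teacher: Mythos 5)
Your proposal is correct and follows exactly the route the paper intends: the paper states the corollary is ``direct from the trace theorem and Sobolev embedding theorem'' applied to the $H^2(\Om')$-valued results of Proposition \ref{analytic of u}, and your argument simply writes out the details (boundedness of the trace into $L^2(\Ga_0)$, transfer of analyticity under a bounded linear map, and commutation of the trace with the Bochner--Laplace integral and with the termwise limits of the series). No gaps.
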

\begin{remark}
The analysis of Theorem \ref{thm:unique al} crucially exploits the analyticity of the measurement $h_i(t)$ in time, which relies on
condition \eqref{eq:g-2}, i.e., $\psi(t)\equiv 0$ for $t\in [0,T_0]$. The condition $\psi(t)\equiv 1$ for $t\geq T_1$ for some $T_1<T$ from \eqref{eq:g-2} ensures the time analyticity of $h_b(t)$ for $t>T_1+\varepsilon$, which is needed for Theorem \ref{thm:unique q}. It should be interpreted as analytically extending the observation $h_b(t)$ by analytically extending $\psi(t)$, both  from $(T_1,T)$ to $(T_1,\infty)$. Alternative conditions on $\psi(t)$ ensuring the time analyticity of $h_b(t)$ for $t>T_1+\varepsilon$, e.g., $\psi(t)$ vanishes identically on $(T_1,T)$, would also be sufficient for Theorem \ref{thm:unique q}.
\end{remark}

\section{Uniqueness}\label{sec:uniqueness}
Now we establish a uniqueness result for recovering the fractional order $\al$ and piecewise constant $a$.
The proof proceeds in two steps: First we show the uniqueness of the order $\al$ from the observation, despite
that the initial condition $u_0$ and source $f$ are unknown. Then we show the uniqueness of $a$. The key
observation is that the contributions from $u_0$ and $f$ can be extracted explicitly.
Since the Dirichlet data is only available on a sub-boundary $\Ga_0$, we view $\rho_k$ as a $L^2(\Ga_0)$-valued
function. The notation $\BK$ denotes the set $\{k\in\BN:\rho_k\not\equiv0\mbox{ in }L^2(\Ga_0)\}$, i.e., the support
of the sequence $(\rho_0,\rho_1,\dots)$ in $L^2(\Ga_0)$ sense, similarly, $\wt\BK=\{k\in\BN:\wt\rho_k\not\equiv0
\mbox{ in }L^2(\Ga_0)\}$, and $\BN^*=\BN\setminus\{1\}$. Below we denote by $\mathfrak A$ the admissible set
of conductivities, i.e.,
\begin{equation*}
\mathfrak{A} = \{1+\mu\chi_D(x): \mu>-1\mbox{ and } D\subset \Omega \mbox{ is  a convex polygon}\}.
\end{equation*}

\begin{theorem}\label{thm:unique al}
Let $\al,\wt\al\in(0,1)$, $(a,f,u_0),(\wt a,\wt f,\wt u_0)\in\mathfrak A\times L^2(\Om)\times L^2(\Om)$, and fix $g$ as \eqref{eq:g} with $\psi(t)$ satisfying condition \eqref{eq:g-2}.  Let $h$ and $\wt h$ be the corresponding Dirichlet observations. Then for some $\si>0$, the condition $h=\wt h$ on $\Ga_0\times[T_0-\si,T_0]$ implies $\al=\wt\al$, $\rho_0=\wt\rho_0$ and $\{(\rho_k,\la_k)\}_{k\in\BK}
=\{(\wt\rho_k,\wt\la_k)\}_{k\in\wt\BK}$ if $\BK,\wt\BK\ne\emptyset$.
\end{theorem}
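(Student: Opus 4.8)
The overall plan is to peel off the excitation part $h_b$, turn the remaining identity into an equality of Laplace transforms, and then read off $\al$, $\rho_0$ and the spectral data from the singularity structure. For Step 1 (reduction to $h_i=\wt h_i$ on $(0,\infty)$): because $\psi\equiv0$ on $[0,T_0]$, the convolution defining $h_b(t)$ in Corollary \ref{analytic of h} vanishes identically for every $t\le T_0$, and likewise for $\wt h_b$. Hence on $\Ga_0\times[T_0-\si,T_0]$ one has $h=h_i$ and $\wt h=\wt h_i$, so the hypothesis $h=\wt h$ gives $h_i=\wt h_i$ on the interval $[T_0-\si,T_0]$ of positive length. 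By Corollary \ref{analytic of h}(i) both $h_i,\wt h_i$ lie in $C^\om(0,\infty;L^2(\Ga_0))$, and two $L^2(\Ga_0)$-valued analytic functions coinciding on a subinterval must coincide on the whole connected set $(0,\infty)$. Taking Laplace transforms and invoking Corollary \ref{analytic of h}(ii), I obtain, for all $\Re(z)>0$,
\[
z^{-1}\rho_0+\Ga(\al+1)z^{-\al-1}\rho_1+\sum_{\ell\ge2}\f{\rho_\ell z^{\al-1}}{z^\al+\la_\ell}=z^{-1}\wt\rho_0+\Ga(\wt\al+1)z^{-\wt\al-1}\wt\rho_1+\sum_{\ell\ge2}\f{\wt\rho_\ell z^{\wt\al-1}}{z^{\wt\al}+\wt\la_\ell}.
\]

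\emph{Step 2: recovering $\al$, $\rho_0$ and $\rho_1$.} I let $z\to0^+$ along the positive reals. The vector $\sum_{\ell\ge2}\la_\ell^{-1}\rho_\ell$ belongs to $\mathrm{Dom}(A)$ (a direct estimate of its graph norm in terms of $\|u_0\|_{L^2(\Om)}$ and $\|f\|_{L^2(\Om)}$), so its $\Om'$-restriction lies in $H^2(\Om')$ and its trace converges in $L^2(\Ga_0)$; a dominated-convergence argument then shows the last sum equals $z^{\al-1}\sum_{\ell\ge2}\la_\ell^{-1}\rho_\ell+o(z^{\al-1})$. Thus $\wh h_i(z)$ admits, as $z\to0^+$, an asymptotic expansion with leading exponents $-\al-1$, $-1$, $\al-1$ and coefficients $\Ga(\al+1)\rho_1$, $\rho_0$, $\sum_{\ell\ge2}\la_\ell^{-1}\rho_\ell$; for $\al\in(0,1)$ these satisfy $-\al-1<-1<\al-1$ and are distinct from the higher exponents $2\al-1,3\al-1,\dots$. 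Matching the two sides by uniqueness of such expansions: when $\rho_1\not\equiv0$ the most singular term forces $\al=\wt\al$ and $\rho_1=\wt\rho_1$, after which the $z^{-1}$-coefficients give $\rho_0=\wt\rho_0$; when $\rho_1\equiv0$ the $z^{-1}$-term is leading and yields $\rho_0=\wt\rho_0$, and then the first nonvanishing coefficient among $z^{\al-1},z^{2\al-1},\dots$ matches exponents to give $\al=\wt\al$. Such a coefficient exists whenever $h_i$ is nonconstant, which is the nondegenerate content of $\BK\ne\emptyset$. (Equivalently one may match the large-$t$ expansion $h_i(t)=\rho_1t^\al+\rho_0+\f{t^{-\al}}{\Ga(1-\al)}\sum_{\ell\ge2}\la_\ell^{-1}\rho_\ell+o(t^{-\al})$ supplied by Lemma \ref{lem:ML-asymp}.)

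\emph{Step 3: matching the spectral pairs.} With $\al=\wt\al$, $\rho_0=\wt\rho_0$ and $\rho_1=\wt\rho_1$ in hand, subtracting the identified terms from the Laplace identity and cancelling the common factor $z^{\al-1}$ leaves
\[
\sum_{\ell\ge2}\f{\rho_\ell}{z^\al+\la_\ell}=\sum_{\ell\ge2}\f{\wt\rho_\ell}{z^\al+\wt\la_\ell},\qquad\Re(z)>0.
\]
The substitution $w=z^\al$ maps $\{\Re(z)>0\}$ onto the sector $\{|\arg w|<\al\pi/2\}$ and transforms this into $\sum_{\ell\ge2}\rho_\ell/(w+\la_\ell)=\sum_{\ell\ge2}\wt\rho_\ell/(w+\wt\la_\ell)$ there. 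Both sides extend to $L^2(\Ga_0)$-valued meromorphic functions on $\BC\setminus(-\infty,0]$ of resolvent type, whose only singularities are simple poles on the negative real axis, located at $-\la_\ell$ with residue $\rho_\ell$ and present exactly when $\rho_\ell\not\equiv0$, i.e. $\ell\in\BK$. By analytic continuation the two functions coincide, so their pole locations and residues agree; together with the indices $k\in\{0,1\}$ settled in Step 2 this yields $\{(\rho_k,\la_k)\}_{k\in\BK}=\{(\wt\rho_k,\wt\la_k)\}_{k\in\wt\BK}$.

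\emph{Main obstacle.} The conceptual skeleton (vanishing of $h_b$, analytic continuation, singularity matching) is short; the difficulty is the analytic bookkeeping. Every interchange of the $z\to0^+$ (or $t\to\infty$) limit with the infinite sums must be justified uniformly, for which I would combine the uniform $\mathrm{Dom}(A)$-convergence from Proposition \ref{analytic of u}, the sharp Mittag-Leffler asymptotics of Lemma \ref{lem:ML-asymp}, and continuity of the trace $H^2(\Om')\to L^2(\Ga_0)$, so that all series and their tails converge in $L^2(\Ga_0)$. The branch choice in $w=z^\al$ and the grouping of repeated eigenvalues (so that residues are genuinely the coefficients $\rho_\ell$) also need care. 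I expect the asymptotic matching that isolates $\al$ — organizing the case distinctions according to which of $\rho_0,\rho_1$ and the successive moments $\sum_{\ell\ge2}\la_\ell^{-j}\rho_\ell$ vanish, tied to the hypothesis $\BK\ne\emptyset$ — to be the most delicate part.
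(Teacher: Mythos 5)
Your Steps 1 and 3 follow the paper's route essentially verbatim: the vanishing of $h_b$ on $[0,T_0]$, time-analyticity of $h_i$ plus analytic continuation, passage to the Laplace transform, and identification of $\{(\rho_k,\la_k)\}_{k\in\BK\cap\BN^*}$ by locating the poles $-\la_k$ and their residues (the paper does this with Cauchy's theorem on small circles, which is the same residue matching you describe). The reading-off of $\rho_0$, $\rho_1$ and of $\al$ when $\rho_1\not\equiv0$ is also equivalent to the paper's large-$t$ argument.

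The gap is in Step 2, in the degenerate case $\rho_1\equiv\wt\rho_1\equiv0$. You claim that ``the first nonvanishing coefficient among $z^{\al-1},z^{2\al-1},\dots$ matches exponents to give $\al=\wt\al$.'' Writing $M_j:=\sum_{\ell\ge2}(-1)^j\la_\ell^{-j-1}\rho_\ell$, the two sides have small-$z$ expansions with exponent sets $\{(j+1)\al-1:M_j\not\equiv0\}$ and $\{(j'+1)\wt\al-1:\wt M_{j'}\not\equiv0\}$. Matching the \emph{first} nonvanishing terms only yields $(j+1)\al=(j'+1)\wt\al$ for some $j,j'\ge0$, which gives $\al=\wt\al$ only if $j=j'$. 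Since the moments are traces on $\Ga_0$, there is no a priori reason that $M_0=\sum_{\ell\ge2}\la_\ell^{-1}\rho_\ell$ cannot vanish in $L^2(\Ga_0)$ while some $\rho_\ell$ does not, so the scenario $j\ne j'$ (e.g.\ $\wt\al=2\al$) is not excluded by what you wrote. One can close this by comparing the \emph{entire} exponent sets and showing, via the pole structure of $F(w)=\sum_{\ell\ge2}\rho_\ell/(w+\la_\ell)$ on the negative real axis, that the required vanishing pattern of the $M_j$ forces $F\equiv0$ and hence $\BK\cap\BN^*=\emptyset$; but that is precisely the nontrivial part and it is missing. The paper avoids the issue entirely: assuming $\al>\wt\al$, it substitutes $\ze=z^\al$, multiplies by $\ze+\la_{k_0}$ for an arbitrary $k_0\in\BK$, and sends $\ze\to-\la_{k_0}$; since $\arg\bigl((-\la_{k_0})^{\wt\al/\al}\bigr)=\wt\al\pi/\al\in(0,\pi)$, the point $(-\la_{k_0})^{\wt\al/\al}$ never hits a pole $-\wt\la_k$, so the right-hand side stays bounded and forces $\rho_{k_0}\equiv0$, a contradiction. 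You should either adopt that pole-collision argument or supply the full exponent-set analysis; as written, $\al=\wt\al$ is not established when $\rho_1\equiv0$.
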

\begin{proof}
By the definition of $g$, we have $g(y,t)\equiv0$ for $y\in\pa\Om$, $t\in[0,T_0]$. Then by Corollary \ref{reps of h}, $h(y,t)$ admits a Dirichlet representation
\[
h(y,t)=\rho_0(y)+\rho_1(y)t^\al+\sum_{k\in\BK\cap\BN^*}\rho_k(y)E_{\al,1}(-\la_k t^\al).
\]
By Corollary \ref{analytic of h}(i), $h(t)$ is analytic as an $L^2(\pa\Om)$-valued function in $t>0$.
By analytic continuation, the condition $h(t)=\wt h(t)$  for $t\in [T_0-\sigma,T_0]$ implies
that $h(t)=\wt h(t)$ in $L^2(\Ga_0)$ for all $t>0$, i.e.,
\[
\rho_0(y)+\rho_1(y)t^\al+\sum_{k\in\BK\cap\BN^*}\rho_k(y)E_{\al,1}(-\la_k t^\al)=\wt\rho_0(y)+\wt\rho_1(y)t^{\wt\al}+\sum_{k\in\wt\BK\cap\BN^*}\wt\rho_k(y)E_{\wt\al,1}(-\wt\lambda_k t^{\wt\al}).
\]
From the decay property of $E_{\al,1}(-\eta)$ (see Lemma \ref{lem:ML-asymp}), we derive $\rho_0(y)+\rho_1(y)t^\al=\wt\rho_0(y)+\wt\rho_1(y)t^{\wt\al}$,
indicating $\rho_0=\wt\rho_0$ and $\rho_1=\wt\rho_1$. Moreover,  we have $\al=\wt\al$ if $1\in\BK$. If $1\not\in\BK$ and $1\not\in\wt\BK$, i.e., $\rho_1=\wt\rho_1=0$, then
\[
\sum_{k\in\BK\cap\BN^*}\rho_k(y)E_{\al,1}(-\la_k t^\al)=\sum_{k\in\wt\BK\cap\BN^*}\wt\rho_k(y)E_{\wt\al,1}(-\wt\lambda_k t^{\wt\al})\quad\mbox{on }\Ga_0\times(0,\infty).
\]
Proposition \ref{analytic of h}(ii) and Laplace transform give
\[
\sum_{k\in\BK\cap\BN^*}\f{\rho_k(y)z^{\al-1}}{z^\al+\la_k}=\sum_{k\in\wt\BK\cap\BN^*}\f{\wt\rho_k(y)z^{\wt\al-1}}{z^{\wt\al}+\wt\lambda_k}.
\]
Assuming that $\al>\wt\al$, dividing both sides by $z^{\wt\al-1}$ and setting $\ze:=z^\al$, we have
\[
\sum_{k\in\BK\cap\BN^*}\f{\rho_k(y)\ze^{1-\frac{\wt\al}{\al}}}{\ze+\la_k}=\sum_{k\in\wt\BK\cap\BN^*}\f{\wt\rho_k(y)}{\ze^{\frac{\wt\al}{\al}}+\wt\la_k}.
\]
Upon noting $\BK\ne\emptyset$, choosing an arbitrary $k_0\in\BK$ and rearranging terms, we derive
\[
\rho_{k_0}(y)\ze^{1-\frac{\wt\al}{\al}}=\left(\sum_{k\in\wt\BK\cap\BN^*}\f{\wt\rho_k(y)}{\ze^{\frac{\wt\al}{\al}}+\wt\la_k} -\sum_{k\in\BK\cap\BN^*\setminus\{k_0\}}\f{\rho_k(y)\ze^{1-\frac{\wt\al}{\al}}}{\ze+\la_k}\right)(\ze+\la_{k_0}).
\]
Letting $\ze\to-\la_{k_0}$ and noting $\al>\wt\al$, the right hand side tends to zero (since all $\wt\la_k$ are positive, and $\arg((-\la_{k_0})^\frac{\wt\al}{\al})=\frac{\wt\al\pi}{\al}\in(0,\pi)$) and hence $\rho_{k_0}\equiv0$ in $L^2(\Gamma_0)$, which contradicts the assumption $k_0\in\BK$. Thus, we deduce $\al\le\wt\al$. The same argument yields $\al\ge\wt\al$, so  $\al=\wt\al$. These discussions thus yield
\begin{align}\label{rho-lambda}
\sum_{k\in\BK\cap\BN^*}\f{\rho_k(y)}{\ze+\la_k}=\sum_{k\in\wt\BK\cap\BN^*}\f{\wt\rho_k(y)}{\ze+\wt\la_k}.
\end{align}
Note that both sides of the identity \eqref{rho-lambda} are $L^2(\Ga_0)$-valued functions in $\ze$. Next we show both converge uniformly in any compact subset in $\BC\setminus(\{-\la_k\}_{k\in\BK\cap\BN^*}\cup\{-\wt\la_k\}_{k\in\wt\BK\cap\BN^*})$ and are analytic in $\BC\setminus(\{-\la_k\}_{k\in\BK\cap\BN^*}\cup\{-\wt\la_k\}_{k\in\wt\BK\cap\BN^*})$. Indeed, since $u_0,f\in L^2(\Om)$, for all $\ze$ in any compact subset of $\BC\setminus(\{-\la_k\}_{k\in\BK\cap\BN^*}\cup\{-\wt\la_k\}_{k\in\wt\BK\cap\BN^*})$, we have
\begin{align*}
&\left\|\sum_{k\in\BK\cap\BN^*}\f{\rho_k}{\ze+\la_k}\right\|_{{\rm Dom}(A)}^2\le c\sum_{\ell\in\BN^*}\la_\ell^2\f{|(u_0,\vp_\ell)|^2+\la_\ell^{-2}|(f,\vp_\ell)|^2}{|\ze+\la_\ell|^2}\\
\le& c\sum_{\ell\in\BN^*}\left(|(u_0,\vp_\ell)|^2+\la_\ell^{-2}|(f,\vp_\ell)|^2\right)<\infty.
\end{align*}
Hence, by the trace theorem, the identity \eqref{rho-lambda} holds for all $\ze\in\BC\setminus(\{-\la_k\}_{k\in\BK\cap\BN^*}\cup\{-\wt\la_k\}_{k\in\wt\BK\cap\BN^*})$.
Assume that $\la_j\not\in\{\wt\la_k\}_{k\in\wt\BK\cap \BN^*}$ for some $j\in\BK\cap\BN^*$. Then we can choose
a small circle $C_j$ centered at $-\la_j$ which does not contain $\{-\wt\la_k\}_{k\in\wt\BK\cap \BN^*}$.
Integrating on $C_j$ and applying the Cauchy theorem give $2\pi\sqrt{-1}\,\rho_j/\la_j=0$, which contradicts the assumption $\rho_j\not\equiv 0$ in $L^2(\Ga_0)$. Hence, $\la_j\in\{\wt\la_k\}_{k\in\wt\BK\cap \BN^*}$ for every $j\in\BK\cap \BN^*$. Likewise, $\wt\la_j\in\{\la_k\}_{k\in\BK\cap\BN^*}$ for every $j\in\wt\BK\cap\BN^*$, and hence $\{\la_k\}_{k\in\BK\cap\BN^*}=\{\wt\la_k\}_{k\in\wt\BK\cap\BN^*} $. From \eqref{rho-lambda}, we obtain
\[
\sum_{k\in\BK\cap\BN^*}\f{\rho_k(y)-\wt\rho_k(y)}{\ze+\la_k}=0,\quad\forall\ze\in\BC\setminus\{-\la_k\}_{k\in\BK\cap\BN^*}.
\]
Varying $j\in\BK\cap\BN^*$ and integrating over $C_j$, we obtain $2\pi\sqrt{-1}\,(\rho_j-\wt\rho_j)/\la_j=0$, which directly implies $\rho_j=\wt\rho_j$ in $L^2(\Ga_0)$. This completes the proof of the theorem.
\end{proof}

\begin{remark}
The condition $\mathbb{K}\neq \emptyset$ holds whenever the following condition is valid $(f,\varphi_1)\neq 0$ or $(u_0,\vp_{\ell,k})-\la_\ell^{-1}(f,\vp_{\ell,k})\neq 0$,  $k=1,\ldots,m_\ell$, $\ell=2,3,\ldots$. Note that the condition
$(f,\varphi_1)\neq 0$ does not rely on the unknown parameter $a$, and can be easily guaranteed.
\end{remark}

The next result gives the uniqueness of recovering the diffusion coefficient $a$
from the lateral boundary observation.

\begin{theorem}\label{thm:unique q}
Let condition \eqref{eq:g-2} be fulfilled, and
let $(a,f,u_0)$, $(\wt a,\wt f,\wt u_0)\in\mathfrak A\times L^2(\Om)\times L^2(\Om)$, and fix
$g$ as \eqref{eq:g}. Let $h$ and $\wt h$ be the corresponding Dirichlet data. Then for any $\si\in(0,T_0]$, the condition $h=\wt h$ on $\Ga_0\times[T_0-\si,T]$ implies  $a=\wt a$.
\end{theorem}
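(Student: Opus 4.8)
The plan is to strip off the unknown initial/source contribution, propagate the remaining boundary-driven signal to $t=\infty$, and thereby reduce the recovery of $a$ to a single-measurement inverse conductivity problem for the stationary elliptic equation, which is settled by known uniqueness results for convex polygonal inclusions.

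First I would isolate $h_b$. Because $\psi\equiv0$ on $[0,T_0]$, the representation of $u_b$ in Proposition \ref{analytic of u} shows $u_b\equiv0$, hence $h_b\equiv0$, on $[0,T_0]$; thus on $[T_0-\si,T_0]$ we have $h=h_i$ and $\wt h=\wt h_i$, so the hypothesis $h=\wt h$ forces $h_i=\wt h_i$ there. Since $h_i,\wt h_i\in C^\om(0,\infty;L^2(\Ga_0))$ by Corollary \ref{analytic of h}(i), analytic continuation gives $h_i\equiv\wt h_i$ on $(0,\infty)$ (this is consistent with Theorem \ref{thm:unique al}, which additionally yields $\al=\wt\al$). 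Subtracting from $h=\wt h$ on $[T_0-\si,T]$ leaves $h_b=\wt h_b$ there, in particular on $(T_1,T)$; as both are analytic on $(T_1+\ve,\infty)$, analytic continuation upgrades this to $h_b\equiv\wt h_b$ on $(T_1+\ve,\infty)$.

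Next I would pass to the stationary limit. Letting $t\to\infty$ in the decomposition $u_b^{\ell,k}=u_{b,1}^{\ell,k}+u_{b,2}^{\ell,k}$ from the proof of Proposition \ref{analytic of u}: for $t>T_1$ the first term reduces to the constant $(U,\vp_{\ell,k})$ (as $\psi\equiv1$), while the transient obeys $\|\sum_{\ell,k}u_{b,2}^{\ell,k}(t)\vp_{\ell,k}\|_{\mathrm{Dom}(A)}\le c\,|t-T_1|^{-\al}\to0$; hence $u_b(t)\to U_*:=\sum_{\ell\ge2}\sum_k(U,\vp_{\ell,k})\vp_{\ell,k}$ in $H^2(\Om')$, where $U_*$ is the mean-zero solution of the stationary problem \eqref{eq:U} (the constant mode is absent since $\langle\eta,\vp_1\rangle=0$). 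Taking traces on $\Ga_0$ and using $h_b\equiv\wt h_b$ yields $U_*|_{\Ga_0}=\wt U_*|_{\Ga_0}$. Thus $U_*,\wt U_*$ solve $\nb\cdot(a\nb U_*)=0$ and $\nb\cdot(\wt a\nb\wt U_*)=0$ in $\Om$ with the same Neumann data $\eta$ on $\pa\Om$ and the same Dirichlet trace on $\Ga_0$, which is exactly the inverse conductivity problem from one partial boundary measurement.

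To finish I would invoke single-measurement uniqueness for convex polygonal conductivity inclusions. Since $\mathrm{dist}(D,\pa\Om)>\mathrm{diam}(D)$ (and likewise for $\wt D$), we have $a=\wt a=1$ near $\pa\Om$, so $U_*,\wt U_*$ are harmonic there and share Cauchy data on $\Ga_0$; unique continuation gives $U_*=\wt U_*$ on the subdomain of $\Om\setminus(\ov D\cup\ov{\wt D})$ adjacent to $\Ga_0$. If $D\ne\wt D$, a vertex of one polygon lies outside the closure of the other, and the transmission condition \eqref{eqn:trans-cond} with $\mu\ne0$ produces a genuine corner singularity there, contradicting the smoothness inherited from the locally harmonic comparison solution; hence $D=\wt D$, and matching the conormal jump across $\pa D$ against the nonconstant $U_*$ forces $\mu=\wt\mu$, i.e. $a=\wt a$. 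The principal obstacle is precisely this elliptic step rather than the reduction: establishing that one stationary measurement determines a convex polygon rests on the delicate corner-singularity analysis of the transmission problem (imported from the inverse conductivity literature cited in the introduction), and one must additionally manage the constant indeterminacy of the pure Neumann problem, resolved by taking the mean-zero limit $U_*$ that genuinely arises, together with the fact that Dirichlet data is available only on $\Ga_0$, which is handled by the unique-continuation step preceding the corner argument.
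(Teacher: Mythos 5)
Your proof is correct and shares the paper's overall skeleton --- peel off $h_i$ via analytic continuation (and Theorem \ref{thm:unique al}), reduce to a one-measurement inverse conductivity problem for a stationary elliptic equation, and conclude by the Friedman--Isakov theorem --- but the two technical hinges are implemented differently. For the reduction, the paper Laplace-transforms the identity $h_b=\wt h_b$, obtains \eqref{eq:Laplace_ug}, proves analyticity in $\ze=z^\al$ away from the poles $\{-\la_\ell,-\wt\la_\ell\}$, and lets $\ze\to0$ to reach the elliptic identity \eqref{eq:eigen}; you instead take the long-time limit $t\to\infty$ directly, using the splitting $u_b=u_{b,1}+u_{b,2}$ and the decay $\|u_{b,2}(t)\|_{\mathrm{Dom}(A)}\le c\,|t-T_1|^{-\al}$ already established in the proof of Proposition \ref{analytic of u}. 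These are exactly the two standard reduction strategies the introduction alludes to; yours is slightly more elementary, does not require first knowing $\al=\wt\al$, and sidesteps the need to interpret the formally divergent quantity $\wh g(0)=\wh\psi(0)\eta$ appearing in \eqref{eq:eigen}--\eqref{eq:elliptic}, since your stationary Neumann datum is simply $\eta$ and the mean-zero normalization arises automatically from $\langle\eta,\vp_1\rangle=0$. For the constant $\mu$ once $D=\wt D$ is known, the paper runs an energy/Dirichlet-principle comparison under the normalization $\mu\le\wt\mu$, whereas you read $\mu$ off the conormal jump in the transmission condition \eqref{eqn:trans-cond}. Your route works but needs one sentence you only gesture at: after unique continuation gives $U_*=\wt U_*$ in $\Om\setminus\ov D$ and the interior Dirichlet problem gives $U_*=\wt U_*$ in $D$, the resulting identity $(\mu-\wt\mu)\,\pa_nU_*|_+=0$ on $\pa D$ yields $\mu=\wt\mu$ only if $\pa_nU_*|_+\not\equiv0$ on $\pa D$; this must be ruled out by noting that otherwise $U_*$ would be constant in $D$, hence, by its Cauchy data on $\pa D$ and unique continuation, constant in all of $\Om$, contradicting $a\pa_\nu U_*=\eta\not\equiv0$. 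With that remark added, both arguments are complete at the same level of rigor, each ultimately resting on the cited single-measurement uniqueness result for convex polygonal inclusions.
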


\begin{proof}
In view of the linearity of problem \eqref{eqn:fde}, we can decompose the data $h(t)$ into
\[
h(t)=h_i(t)+h_b(t),\quad t\in(0,T],
\]
with $h_i(t)$ and $h_b(t)$ given by
\begin{align*}
h_i(t) & =\rho_0+\rho_1t^\al+\sum_{k\in\BK\cap\BN^*}\rho_k {E_{\al,1}}(-\la_k t^\al),\\
h_b(t) & =\sum_{\ell=1}^\infty\int_0^t(t-s)^{\al-1}E_{\al,\al}(-\la_\ell(t-s)^\al){ \sum_{k=1}^{m_\ell}\langle g(s),\vp_{\ell,k}\rangle\,\d s\,\vp_{\ell,k}},
\end{align*}
which solve problem \eqref{eqn:fde} with $g\equiv0$ and $f=u_0\equiv0$, respectively. By the
choice of $g$ in \eqref{eq:g}, the interval $[0,T]$ can be divided into two subintervals: $(0,T_0]$ and
$[T_0,T]$. For $t\in(0,T_0)$, $\psi(t)\equiv0$, Theorem \ref{thm:unique al} implies that
$\{(\rho_k,\la_k)\}_{k\in\BK}=\{(\wt\rho_k,\wt\la_k)\}_{k\in\wt
\BK}$ and $\al=\wt\al$, from which we deduce $h_i(t)=\wt h_i(t)$ for
all $ t>0$. For $t\in[T_0,T]$, this and the condition $ h(t)=\wt h(t)$ imply
$h_b(t)=\wt h_b(t)$ in $L^2(\Ga_0)$, and hence
\begin{align*}
&\sum_{\ell=1}^\infty\int_{T_0}^t(t-s)^{\al-1}E_{\al,\al}(-\la_\ell(t-s)^\al)\sum_{k=1}^{m_\ell}\langle g(s),\vp_{\ell,k}\rangle\,\d s\,\vp_{\ell,k}\\
= &\sum_{\ell=1}^\infty\int_{T_0}^t(t-s)^{\al-1}E_{\al,\al}(-\wt\la_\ell(t-s)^\al)\sum_{k=1}^{\wt m_\ell}\langle g(s),\wt\vp_{\ell,k}\rangle\,\d s\,\wt\vp_{\ell,k},\quad t\in[T_0,T].
\end{align*}
By the analyticity in Corollary \ref{analytic of h}, the above identity holds for $t\in[T_0,
\infty)$. Thus applying Laplace transform on both side gives
\begin{align}\label{eq:Laplace_ug}
\sum_{\ell=2}^\infty\f{\sum_{k=1}^{m_\ell}\langle\wh g(z),\vp_{\ell,k}\rangle\vp_{\ell,k}}{z^\al+\la_\ell}=
\sum_{\ell=2}^\infty\f{\sum_{k=1}^{\wt m_\ell}\langle\wh g(z),\wt\vp_{\ell,k}\rangle\wt\vp_{\ell,k}}{z^\al+\wt\la_\ell},\quad\forall\,\Re(z)>0.
\end{align}
Since $\la_1=\wt\la_1=0$ and $\vp_1=\wt\vp_1=|\Om|^{-\frac12}$, the index
in \eqref{eq:Laplace_ug} starts with $\ell=2$. Below we repeat the argument for Theorem
\ref{thm:unique al}. First we show that both sides of \eqref{eq:Laplace_ug} are analytic with
$\ze=z^\al$ in any compact subset of  $\BC\setminus\{-\la_\ell,-\wt\la_\ell\}_{\ell\ge2}$.
Let $U\in \mathrm{Dom}(A^{\frac{1}{4}+\ve})$ be a solution of problem \eqref{eq:U}, for all $\ze$ in a compact subset
of $\BC\setminus\{-\la_\ell,-\wt\la_\ell\}_{\ell\ge2}$, we have
\begin{align*}
 &\left\|\sum_{\ell=2}^\infty\f{\sum_{k=1}^{m_\ell}\langle\wh g(\ze^\frac{1}{\al}),\vp_{\ell,k}\rangle\vp_{\ell,k}}{\ze+\la_\ell}\right\|_{{\rm Dom}(A^{\frac{1}{4}+\ve})}^2
 \le c\sum_{\ell=2}^\infty\la_\ell^{\frac{1}{2}+2\ve}\sum_{k=1}^{m_\ell}\left|\f{\langle\eta,\vp_{\ell,k}\rangle}{\ze+\la_\ell}\right|^2\\
 =&c\sum_{\ell=1}^\infty\la_\ell^{\frac{1}{2}+2\ve}\sum_{k=1}^{m_\ell}\left|\f{\la_\ell(U,\vp_{\ell,k})}{\ze+\la_\ell}\right|^2\le c\|U\|_{{\rm Dom}(A^{\frac{1}{4}+\ve})}^2<\infty.
\end{align*}
Since each term of the series is a $\mathrm{Dom}(A^{\frac14+\ve})$-valued function analytic in
$\ze$ and converges uniformly in $\ze$, by the trace theorem, we obtain that both sides
of  $\eqref{eq:Laplace_ug}$ are $L^2(\pa\Om)$-valued functions analytic in $\ze\in\BC\setminus\{-\la_\ell,-\wt\la_\ell\}_{\ell\ge2}$.
Since $\la_\ell,\wt\la_\ell>0$ for $\ell\ge2$, we may take $\ze\to0$ in \eqref{eq:Laplace_ug} and obtain
\begin{align}\label{eq:eigen}
\sum_{\ell=2}^\infty\f{\sum_{k=1}^{m_\ell}\langle\wh g(0),\vp_{\ell,k}\rangle\vp_{\ell,k}}{\la_\ell}=
\sum_{\ell=2}^\infty\f{\sum_{k=1}^{\wt m_\ell}\langle\wh g(0),\wt\vp_{\ell,k}\rangle\wt\vp_{\ell,k}}{\wt\la_\ell}.
\end{align}
Hence, $ w={\wt w}$ on $\Ga_0$, where $w$ and ${\wt w}$
are the Dirichlet boundary data with $a$ and $\wt a$ in the elliptic problem
\begin{align}\label{eq:elliptic}
\begin{cases}
-\nb\cdot(a\nb w)=0 & \mbox{in }\Om,\\
a\pa_\nu w=\wh g(0) & \mbox{on }\pa\Om
\end{cases}
\end{align}
with the compatibility condition $\int_\Om w\,\d x=0$.
Indeed, the solution $w$ of \eqref{eq:elliptic} can be represented as
\[
w=\sum_{\ell=2}^\infty\sum_{k=1}^{m_\ell}(w,\vp_{\ell,k})\vp_{\ell,k}=\sum_{\ell=2}^\infty\sum_{k=1}^{m_\ell}\la_\ell^{-1}\langle\wh g(0),\vp_{\ell,k}\rangle\vp_{\ell,k},
\]
where the first equality follows from the compatibility condition $\int_\Om w\,\d x=0$ and
the second is due to integration by part. By the choice of $g$ in \eqref{eq:g}, the elliptic
problem \eqref{eq:elliptic} is uniquely solvable. Then from \cite[Theorem 1.1]{FriedmanIsakov:1989},
we deduce that $D=\wt D$ is uniquely determined by the input $\wh g(0)
=\wh{\psi}(0)\eta$. Indeed, Friedman and Isakov \cite{FriedmanIsakov:1989} proved the unique
determination of the convex polygon $D$ for the case $\mu\equiv1$, based on extending the solution
$w$ harmonically across a vertex of $D$ and leading a contradiction. The proof does
not depend on the knowledge of the parameter $\mu$ and hence it is also applicable here. Once
 $D$ is  determined, it suffices to show the uniqueness of the scalar $\mu$. Suppose
$\mu\le\wt\mu$, i.e., $a\le\wt a$ in $D$ and $a\equiv\wt a\equiv1$ outside $D$. Thus $w$ and
$\wt w$ are harmonic functions near $\pa\Om$ with identical Cauchy data on $\Ga_0$, we conclude
$w=\wt w$ near $\pa\Om$. By multiplying both sides of the governing equation in
\eqref{eq:elliptic} with $w$, integrating over the domain $\Omega$ and applying Green's formula, we have
\begin{align*}
 0 & = \int_\Omega -\nabla \cdot(a\nabla w) w \,\d x = \int_\Omega a|\nabla w|^2 \,\d x - \int_{\partial\Omega}w\,\pa_\nu w\,\d S,
\end{align*}
i.e.,
\begin{align*}
  \int_\Omega a|\nabla w|^2 \,\d x = \int_{\partial\Omega}w\,\pa_\nu w\,\d S.
\end{align*}
Now since $w$ and $\widetilde w$ have identical Cauchy data on the boundary $\partial\Omega$, we have
$\int_{\partial\Omega}w\,\pa_\nu w\,\d S = \int_{\partial\Omega}\widetilde w\,\pa_\nu \widetilde w\,\d S$, and consequently
\begin{align*}
  \int_\Omega a|\nabla w|^2 \,\d x = \int_{\Omega}\widetilde a|\nabla \widetilde w|^2\,\d x.
\end{align*}
This identity and the inequality $\widetilde a\geq a$ a.e. in $\Omega$ imply
\begin{align*}
  \int_\Omega a|\nabla w|^2 \,\d x \geq \int_{\Omega} a|\nabla \widetilde w|^2\,\d x,
\end{align*}
which immediately implies
\begin{align*}
  \frac{1}{2}\int_\Omega a|\nabla w|^2\d x - \int_{\partial\Omega}w \widehat g(0) \,\d S  \geq \frac{1}{2}\int_{\Omega} a|\nabla \widetilde w|^2\,\d x - \int_{\partial\Omega} \widetilde w \widehat g(0)\,\d S.
\end{align*}
By the Dirichlet principle \cite{Courant:1950}, $w$ is the minimizer of the energy integral, and hence $w=\wt w$ and $a=\wt a$.
\end{proof}

\begin{remark}\label{rem:poly-ball}
Note that the uniqueness of the inclusion $D$ in \cite{FriedmanIsakov:1989} relies on the assumption $D$ being a convex polygon with $\mathrm{diam}(D)<\mathrm{dist}(D,\pa \Omega)$. Alessandrini \cite{alessandrini1996analicity} removed the diameter assumption for a specialized choice of the boundary data. The works \cite{seo1995uniqueness,kang2001note} proved the unique determination of $D$ when $D$ is a disc or ball.
For general shapes, even for ellipses or ellipsoids, this inverse problem appears still open. Note that in the uniqueness proof, the key is the reduction of the problem to the elliptic case, with a nonzero Neumann boundary condition. In particular, the result will not hold if the temporal component $\psi$ vanishes identically over the interval $[0,T]$, i.e., condition \eqref{eq:g-2} does not hold.
\end{remark}

\begin{remark}\label{rem:general-a}
If the diffusion coefficient $a$ is not piecewise constant, it is also possible to show the unique
recovery if the boundary excitation data $g$ is specially designed. For example,
consider problem \eqref{eqn:fde} with a more general elliptic operator
\begin{align}\label{eq:general coeff}
\cA u(x):=-\nb\cdot(a(x)\nb u(x))+q(x)u(x),\quad x\in\ov\Om.
\end{align}
Here $a\in C^2(\ov\Om)$ and $q\in L^\infty(\Om)$ with {$a>0$ in $\ov\Om$} and $q\ge0$ in $\Om$, and
the Neumann data $g$ is constructed as follows.
First, we choose sub-boundaries $\Ga_1$ and $\Ga_2$ such that
$\Ga_1\cup\Ga_2=\pa\Om$ and $\Ga_1\cap\Ga_2\ne\emptyset$.
Let $\chi\in C^\infty(\pa\Om)$ be a cut-off function with $\mathrm{supp}(\chi)=\Ga_1$ and $\chi\equiv1$ on $\Ga_1'$, with $\Ga_1'\subset\Ga_1$ such that $\Ga_1'\cup\Ga_2=\pa\Om$, $\Ga_1'\cap\Ga_2\ne\emptyset$; see Fig. \ref{Fig:boundary} for an illustration of the geometry in the two-dimensional case. Now we fix $0\le T_0<T_1<T$ and choose a strictly increasing sequence $\{t_k\}_{k=0}^\infty$ such that $t_0=T_0$ and $\lim_{k\to\infty}t_k=T_1$. Consider a sequence $\{p_k\}_{k=1}^\infty\subset\BR_+$ and a sequence $\{\psi_k\}_{k=1}^\infty\subset C^\infty([0,\infty);\ov{\BR_+})$ such that
\[
\psi_k=\begin{cases}
0 & \mbox{on }[0,t_{2k-1}],\\
p_k & \mbox{on }[t_{2k},\infty).
\end{cases}
\]
Then we fix $\{b_k\}_{k=0}^\infty\subset\BR_+$ such that $\sum_{k=1}^\infty b_k\|\psi_k
\|_{W^{2,\infty}(\BR_+)}<\infty$, and define the Neumann data $g$ by
\begin{align}\label{eq:g_series}
g(y,t):=\sum_{k=1}^\infty g_k(y,t)=\chi\sum_{k=1}^\infty b_k\psi_k(t)\eta_k(y),
\end{align}
where the set $\{\eta_k\}_{k=1}^\infty$ is chosen to be dense in $H^{\frac12}(\pa\Om)$ and
$\|\eta_k \|_{H^{\frac12}(\pa\Om)}=1$. Note that the Neumann data $g$ defined in
\eqref{eq:g_series} plays the role of infinity measurements \cite{CanutoKavian:2001,
CanutoKavian:2004}, and hence the unique recovery of the fractional order $\al$ and both
$a$ and $q$ from one boundary measurement. We provide a detailed proof in the appendix for completeness. See
also some related discussions in \cite{KianLiLiuYamamoto:2021,Kian:2022} with different
problem settings.  However, this choice of $g$ is impossible to numerically realize
in practice, due to the need to numerically represent infinitesimally small quantities.
\begin{figure}[hbt!]
	\centering
		\includegraphics[width=0.5\textwidth]{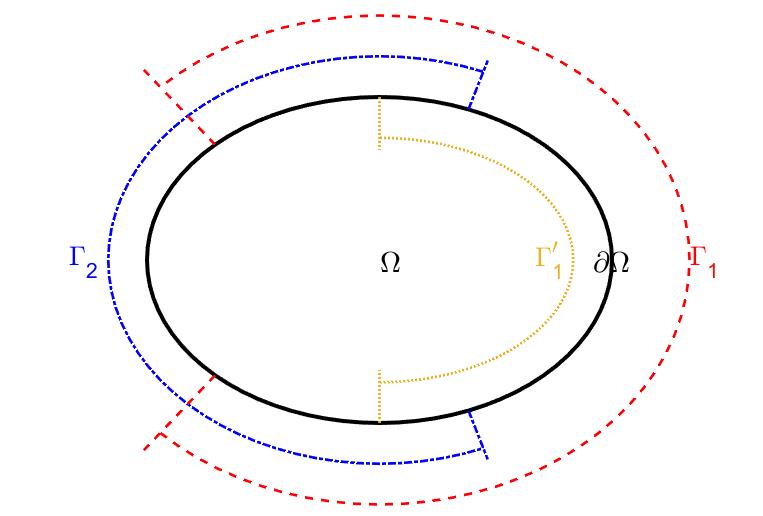}
	\caption{The schematic illustration of the sub-boundaries $\Ga_1'$, $\Ga_1$ and $\Ga_2$ of the boundary $\partial\Omega$.}
	\label{Fig:boundary}
\end{figure}
\end{remark}


\section{Reconstruction algorithm}\label{sec:alg}
In this section, we derive an algorithm for recovering the fractional order $\al$ and the coefficient
$a$, directly inspired by the uniqueness proof. We divide the recovery procedure into three steps:
\begin{enumerate}
\item[(i)] use the asymptotic behavior of the solution of problem \eqref{eqn:fde} near $t=0$ to recover $\al$;
\item[(ii)] use analytic extension to extract the solution of problem \eqref{eqn:fde} with zero $f$ and $u_0$;
\item[(iii)] use the level set method \cite{OsherFedkiw:2001} to recover the shape of the unknown medium $D\subset\Om$.
\end{enumerate}

First, we give an asymptotics of the Dirichlet data $h(t)$ of problem \eqref{eqn:fde}. The result is
direct from the representation and properties of $E_{\al,1}(z)$ near $z=0$
and the trace theorem.

\begin{proposition}\label{prop:asym of h}
If $u_0\in {\rm Dom}(A^{1+\frac{s}{2}})$ and $f\in{\rm Dom}(A^{\frac{s}{2}})$ with $s>1$. Let $ h=u|_{\pa\Om\times(0,T)}$ be the
Dirichlet trace of the solution to problem \eqref{eqn:fde} with $g$ given as \eqref{eq:g},
then the following asymptotic holds:
\[
h(y,t)=u_0(y)+(\cA u_0-f)(y)t^\al+O(t^{2\al})\quad\mbox{as }t\to0^+.
\]
\end{proposition}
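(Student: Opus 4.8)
The plan is to reduce the boundary asymptotics to an interior asymptotics of $u_i$ alone, exploiting that the excitation is inactive near $t=0$. Since $g(y,t)=\psi(t)\eta(y)$ with $\psi\equiv0$ on $[0,T_0]$ by \eqref{eq:g}--\eqref{eq:g-2}, the representation of $u_b$ in Proposition \ref{reps of ui} gives, for every $t<T_0$, $u_b^{\ell,k}(t)=\int_0^t(t-s)^{\al-1}E_{\al,\al}(-\la_\ell(t-s)^\al)\langle g(s),\vp_{\ell,k}\rangle\,\d s=0$ because the integrand vanishes identically. Hence $u_b\equiv0$ on $\Om\times[0,T_0)$, so for $t$ near $0$ one has $h(t)=u_i(t)|_{\pa\Om}$, and it suffices to expand $u_i$ near $t=0$ in a norm admitting a continuous trace on $\pa\Om$, e.g. $\mathrm{Dom}(A^\beta)$ with $\mathrm{Dom}(A^\beta)\hookrightarrow H^{2\beta}(\Om)$, and then invoke the trace theorem.

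For the interior expansion I would work modewise from the representation of $u_i$ and Taylor-expand the Mittag-Leffler function at the origin, $E_{\al,1}(z)=1+z/\Ga(1+\al)+O(|z|^2)$. Writing $c_{\ell,k}:=(u_0,\vp_{\ell,k})-\la_\ell^{-1}(f,\vp_{\ell,k})$ for $\ell\ge2$, using $u_i^1(t)=(u_0,\vp_1)+\Ga(1+\al)^{-1}(f,\vp_1)t^\al$ for the kernel mode, and substituting $E_{\al,1}(-\la_\ell t^\al)=1-\la_\ell t^\al/\Ga(1+\al)+r_\ell(t)$, collecting the constant and $t^\al$ contributions yields
\[
u_i(t)=u_0+\frac{t^\al}{\Ga(1+\al)}\big(f-\cA u_0\big)+\sum_{\ell\ge2}\sum_{k=1}^{m_\ell}c_{\ell,k}\,r_\ell(t)\,\vp_{\ell,k},
\]
where I used $(\cA u_0,\vp_{\ell,k})=\la_\ell(u_0,\vp_{\ell,k})$ to identify the coefficient of $t^\al$ as $\Ga(1+\al)^{-1}(f-\cA u_0)$ (the factor $\Ga(1+\al)^{-1}$ being the linear Taylor coefficient of $E_{\al,1}$). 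The constant term reproduces $u_0$ and the second term is the claimed $t^\al$-correction; it remains to show the residual series is $O(t^{2\al})$ in the chosen norm.

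The technical heart is a uniform-in-$t$ bound on the residual. The single-mode remainder satisfies $|r_\ell(t)|\le c\,\min\{(\la_\ell t^\al)^2,\,\la_\ell t^\al\}$, the quadratic bound for $\la_\ell t^\al\lesssim1$ coming from the next Taylor term $z^2/\Ga(1+2\al)$ and the linear bound for $\la_\ell t^\al\gtrsim1$ from the decay $E_{\al,1}(-x)=O(x^{-1})$ of Lemma \ref{lem:ML-asymp}. Retaining the quadratic bound $|r_\ell(t)|\le c\,\la_\ell^2t^{2\al}$ gives
\[
\Big\|\sum_{\ell\ge2}\sum_{k=1}^{m_\ell}c_{\ell,k}r_\ell(t)\vp_{\ell,k}\Big\|_{\mathrm{Dom}(A^\beta)}^2\le c\,t^{4\al}\sum_{\ell\ge2}\la_\ell^{2\beta+4}\sum_{k=1}^{m_\ell}c_{\ell,k}^2,
\]
and $\la_\ell^{2\beta+4}c_{\ell,k}^2\lesssim\la_\ell^{2\beta+4}(u_0,\vp_{\ell,k})^2+\la_\ell^{2\beta+2}(f,\vp_{\ell,k})^2$, so the series converges exactly when $u_0\in\mathrm{Dom}(A^{\beta+2})$ and $f\in\mathrm{Dom}(A^{\beta+1})$; this is where the hypotheses $u_0\in\mathrm{Dom}(A^{1+s/2})$, $f\in\mathrm{Dom}(A^{s/2})$ enter, corresponding to $\beta=s/2-1$. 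The trace theorem and Sobolev embedding then transfer the $\mathrm{Dom}(A^\beta)$ estimate to $L^2(\pa\Om)$ (indeed to $C(\pa\Om)$ when $2\beta>d/2$), and restricting to $\Ga_0$ finishes the argument.

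The main obstacle I anticipate is precisely this balance: the genuine $t^{2\al}$ rate forces the $\la_\ell^2$ weight coming from $r_\ell$, and one must verify the regularity budget suffices for the weighted series to converge in a trace-admitting space (the crude bound above needs $\beta>1/4$, hence $s$ comfortably large). When the margin is tight one instead interpolates between the quadratic and linear bounds, $|r_\ell(t)|\le c(\la_\ell t^\al)^{1+\te}$ with $\te\in[0,1]$, trading the temporal rate $t^{(1+\te)\al}$ against the admissibility constraint $\beta+\te\le s/2$; this recovers the leading $t^\al$ behavior with an $o(t^\al)$ residual under the stated regularity. A cleaner but more delicate alternative is to subtract the ansatz $v:=u_i-u_0-\Ga(1+\al)^{-1}(f-\cA u_0)t^\al$, which solves $\pa_t^\al v+\cA v=-\Ga(1+\al)^{-1}\cA(f-\cA u_0)\,t^\al$ with $v(0)=0$; the $t^\al$ source then yields $v=O(t^{2\al})$ directly from the smoothing estimates for the subdiffusion solution operator, at the cost of handling the compatibility of the boundary condition for the subtracted profile.
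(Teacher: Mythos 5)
Your proposal is correct in substance and follows exactly the route the paper intends: the paper offers no written proof beyond the remark that the result is ``direct from the representation and properties of $E_{\al,1}(z)$ near $z=0$ and the trace theorem,'' and your argument is precisely that sketch carried out --- kill $u_b$ near $t=0$ using $\psi\equiv0$ on $[0,T_0]$, Taylor-expand $E_{\al,1}$ modewise, and push the remainder through a trace-admitting norm. Two of your side observations deserve to be recorded rather than glossed over. First, your computation gives the coefficient of $t^\al$ as $\Ga(1+\al)^{-1}(f-\cA u_0)$, which is what the PDE forces (from $\pa_t^\al u\approx f-\cA u_0$ at $t=0^+$ and $\pa_t^\al t^\al=\Ga(1+\al)$); the statement's $(\cA u_0-f)$ therefore carries both a sign error and a missing factor $\Ga(1+\al)^{-1}$. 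This is harmless for the order-recovery step, which only fits $c_0+c_1t^\al$, but you should not describe your (correct) coefficient as ``the claimed correction.'' Second, your regularity bookkeeping is right and exposes that the hypothesis $s>1$ is not sufficient for the full $O(t^{2\al})$ remainder in a norm admitting an $L^2(\pa\Om)$ trace: the quadratic remainder bound forces $\be\le s/2-1$ while the trace needs $2\be>1/2$, so one needs roughly $s>5/2$, and the interpolated bound under $s>1$ only yields $o(t^\al)$. Since the paper supplies no proof, this is a genuine (if minor) gap in the statement's hypotheses rather than in your argument; your fallback via subtracting the ansatz and using smoothing estimates is the natural way to sharpen it, provided one tracks the $O(t^\al)$ Neumann mismatch of the subtracted profile as you note.
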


In view of Proposition $\ref{prop:asym of h}$, for any fixed $y_0\in\pa\Om$, the asymptotic behavior of $h(y_0,t)$ as $t\to0^+$ allows recovering the order $\al$. This can be achieved by minimizing the following objective in $\al$, $c_0$ and $c_1$:
\begin{align}\label{min al}
J(\al,c_0,c_1)=\|c_0+c_1t^\al-h(y_0,t)\|_{L^2(0,t_0)}^2,
\end{align}
for some small $t_0>0$. Note that it is important to take $t_0$ sufficiently small so
that higher-order terms can indeed be neglected. The idea of using asymptotics for order recovery was
employed in \cite{HatanoYamamoto:2013,JinKian:2021rspa,JinKian:2022siap}.

When recovering the diffusion coefficient $a$, we need to deal with the unknown functions
$u_0$ and $f$. This poses significant computational challenges since standard regularized reconstruction
procedures \cite{EnglHankeNeubauer:1996} require a fully known forward operator. To overcome the challenge, we appeal to
Theorem \ref{thm:unique q}: $u_0$ and $f$ only contribute to $h_i(t) $ which is fully determined
by $\{\la_\ell,\rho_\ell\}_{\ell\in\BK}$. Indeed, by Theorem \ref{thm:unique al}, $\{\la_\ell,
\rho_\ell\}_{\ell\in\BK}$ can be uniquely determined by $h(t)$, $t\in[0,T_0]$.
Hence in theory we can extend $h(t)=h_i(t)$ from $t\in[0,T_0]$ to $t\in[0,T]$, by
means of analytic continuation, to approximate the Dirichlet data of \eqref{eqn:fde}
with $g\equiv0$ and given $u_0$ and $f$. In practice, we look for approximations of the form
\[
h(t)\approx\f{p_0+p_1t+\cdots+p_r t^r}{q_0+q_1t+\cdots+q_r t^r}:=h_r(t),\quad t\in[0,T],
\]
where $r\in\BN$ is the polynomial order. This choice is motivated by the observation that
Mittag-Leffler functions can be well approximated by rational polynomials \cite{AtkinsonOsseiran:2011,Mainardi:2014,DuanZhang:2021}.
The approximation $h_r$ can be constructed efficiently by the AAA algorithm \cite{NakatsukasaTrefethen:2018}.
Now, we can get the Dirichlet data of problem \eqref{eqn:fde} with a given $g$ and $u_0=f\equiv0$,
by defining the reduced data
\[
\ov h(t):=\begin{cases}
0, & t\in[0,T_0],\\
h(t)-h_r(t), & t\in[T_0,T].
\end{cases}
\]

Below we use the reduced data $\ov h$ to recover a piecewise constant $a$.
Parameter identification for the subdiffusion model is commonly carried out by minimizing a
suitable penalized objective. Since $a$ is piecewise constant, it suffices to recover
the interface between different media. The level set method can effectively
capture the interface in an elliptic problem \cite{Santosa:1995,ItoKunischLi:2001,Burger:2001,
ChungChanTai:2005}, which we extend to the time-fractional model \eqref{eqn:fde} below.
Specifically, we consider a slightly more general setting where the inclusion $D\subset\Om$ has
a diffusivity value $a_1$ and the background $\Om\setminus D$ has a diffusivity value $a_2$,
with possibly unknown $a_1$ and $a_2$. That is, the diffusion coefficient $a$ is represented as
\begin{align}\label{eq:level set a}
a(x)=a_1H(\phi(x))+a_2(1-H(\phi(x)))\quad\mbox{in }\Om,
\end{align}
where $H(x)$ and $\phi(x)$ denote the Heaviside function and level set function (a signed distance function):
\[
H(x):=\begin{cases}
1, & x\ge0,\\
0, & x<0,
\end{cases}
\quad\mbox{and}\quad\phi(x):=\begin{cases}
d(x,\pa D), & x\in D,\\
-d(x,\pa D), & x\in\Om\setminus\ov D,
\end{cases}
\]
respectively. Then $\phi $ satisfies $D=\{x\in\Om:\phi(x)>0\}$, $ \Om
\setminus\ov D=\{x\in\Om:\phi(x)<0\}$ and $\pa D=\{x\in\Om:\phi(x)=0\}$.
To find the values $a_1$ and $a_2$ and the interface $\pa D$, we minimize the following functional
\begin{align}\label{min phi}
J(\phi,a_1,a_2)={ \f12\|u(a)-\ov h\|_{L^2(0,T;L^2(\Ga_0))}^2}+\be\int_\Om|\nb a|,
\end{align}
where $u(a)$ is the solution to problem \eqref{eqn:PDE-uib}, and $\be>0$ is the
penalty parameter. The total variation term $\int_\Om|\nb a|$ is to stabilize
the inverse problem, which is defined by 
\[
\int_\Om|\nb a|:=\sup_{ \vp\in(C_0(\ov\Om))^d,|\vp|\le1}\int_\Om a\nb\cdot\vp\,\d x,
\]
where $|\cdot|$ denotes the Euclidean norm. Then we apply the standard gradient descent method
to minimize problem \eqref{min phi}. The next result gives the gradient of $J$.
The notations $J_{T-}^{1-\al}$ and $D_{T-}^\al$ denote the backward
Riemann-Liouville integral and derivative,
defined respectively by \cite[Sections 2.2 and 2.3]{Jin:book2021}
\begin{align*}
J_{T-}^{1-\al}v(t)&:=\f1{\Ga(1-\al)}\int_t^T(s-t)^{-\al}v(s)\,\d s,\\
D_{T-}^\al v(t)&:=-\f1{\Ga(1-\al )}\f\d{\d t}\int_t^T(s-t)^{-\al }v(s)\,\d s.
\end{align*}

\begin{proposition}\label{prop:grad}
The derivative $\f\d{\d a}J$ is formally given by
\[
\f\d{\d a}J(a)=-\int_0^T\nb u\cdot\nb v\,\d t-\be\nb\cdot\left(\f{\nb a}{|\nb a|}\right),
\]
where $ v=v(x,t;a)$ solves the adjoint problem
\begin{align}\label{adjoint}
\begin{cases}
D_{T-}^\al v-\nb\cdot(a\nb v)=0 & \mbox{in }\Om\times[0,T),\\
a\pa_\nu v=(u-\ov h)\chi_{\Ga_0} & \mbox{on }\pa\Om\times[0,T),\\
J_{T-}^{1-\al} v(\cdot,T)=0 & \mbox{in }\Om.
\end{cases}
\end{align}
\end{proposition}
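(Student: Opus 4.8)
The plan is to carry out a standard adjoint-state (Lagrangian) computation, reading the derivative formally as the statement indicates. First I would perturb the coefficient as $a\to a+\ve\,\delta a$, with $\delta a$ supported away from $\pa\Om$ in accordance with the level-set parametrization \eqref{eq:level set a}, and expand the state as $u(a+\ve\,\delta a)=u+\ve\,w+o(\ve)$. Differentiating the weak form of the forward problem \eqref{eqn:PDE-uib} in $\ve$ at $\ve=0$ (noting that the data $g$, $f$, $u_0$ do not depend on $a$) yields the sensitivity identity, valid for every test function $\vp$,
\[
\int_\Om(\pa_t^\al w)\vp\,\d x+\int_\Om a\,\nb w\cdot\nb\vp\,\d x=-\int_\Om\delta a\,\nb u\cdot\nb\vp\,\d x,
\]
together with the homogeneous data $w(0)=0$ and $a\pa_\nu w=0$ on $\pa\Om$ (the latter because $\delta a$ vanishes near $\pa\Om$). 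Differentiating the fidelity term of \eqref{min phi} gives
\[
\f\d{\d\ve}\Big|_{\ve=0}\f12\|u-\ov h\|_{L^2(0,T;L^2(\Ga_0))}^2=\int_0^T\!\!\int_{\Ga_0}(u-\ov h)\,w\,\d S\,\d t.
\]

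Next I would introduce the adjoint state $v$ solving \eqref{adjoint} and use it to eliminate $w$. Testing the adjoint equation $D_{T-}^\al v-\nb\cdot(a\nb v)=0$ against $w$, applying Green's formula in space, and inserting the adjoint Neumann condition $a\pa_\nu v=(u-\ov h)\chi_{\Ga_0}$ produce
\[
\int_0^T\!\!\int_\Om(D_{T-}^\al v)\,w\,\d x\,\d t+\int_0^T\!\!\int_\Om a\,\nb v\cdot\nb w\,\d x\,\d t=\int_0^T\!\!\int_{\Ga_0}(u-\ov h)\,w\,\d S\,\d t.
\]
The crucial step is the fractional integration-by-parts identity in time, $\int_0^T(D_{T-}^\al v)\,w\,\d t=\int_0^T v\,(\pa_t^\al w)\,\d t$, whose endpoint contributions cancel precisely because $w(0)=0$ and the terminal condition $J_{T-}^{1-\al}v(\cdot,T)=0$ is imposed in \eqref{adjoint}. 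Substituting this into the previous display and comparing with the sensitivity identity evaluated at $\vp=v$ shows that the two left-hand sides coincide, so the fidelity derivative collapses to $-\int_0^T\!\int_\Om\delta a\,\nb u\cdot\nb v\,\d x\,\d t$; reading off the coefficient of $\delta a$ gives the term $-\int_0^T\nb u\cdot\nb v\,\d t$.

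Finally I would add the first variation of the penalty $\be\int_\Om|\nb a|$, which is formally $-\be\,\nb\cdot(\nb a/|\nb a|)$, to obtain the asserted expression. I expect the genuine difficulties to lie not in the algebra but in the two places where the computation is only formal. The first is the fractional integration-by-parts in time and the exact pairing of the Djrbashian--Caputo derivative with its backward Riemann--Liouville adjoint $D_{T-}^\al$: this is exactly what dictates the terminal condition in \eqref{adjoint}, and it must be justified for the regularity class of the states involved, since the boundary terms depend delicately on $J_{T-}^{1-\al}v$ and $J_{0+}^{1-\al}w$ at the endpoints. The second is the non-smoothness of the total-variation term, so that $-\nb\cdot(\nb a/|\nb a|)$ should be understood as an element of the subdifferential and is undefined where $\nb a=0$; this is precisely why the statement claims only a formal gradient. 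A fully rigorous treatment would additionally require establishing G\^ateaux differentiability of the state map $a\mapsto u(a)$, i.e. the $o(\ve)$ expansion, in a suitable Bochner space.
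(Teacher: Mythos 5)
Your proposal is correct and follows essentially the same adjoint-state computation as the paper: the same weak sensitivity identity for $w=u'(a)[\delta a]$, the same fractional integration-by-parts in time whose endpoint terms vanish because $w(0)=0$ and $J_{T-}^{1-\al}v(\cdot,T)=0$, and the same formal first variation of the total-variation penalty. The only cosmetic difference is that you take $\delta a$ supported away from $\pa\Om$ so the sensitivity problem has homogeneous Neumann data, whereas the paper keeps the boundary term $a\pa_\nu w=-b\pa_\nu u$; both yield the identical weak formulation and the same conclusion.
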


\begin{proof}
We write $J(a)=J_1(a)+J_2(a)$, with $J_1(a)=\f12\| u(a)-\ov h\|_{L^2(0,T;L^2(\Ga_0))}^2$ and $J_2(a)=\be\int_\Om|\nb a|$. For the term $J_1$, the directional derivative along $b$ is
\[
\left.\f\d{\d\ve}\right|_{\ve=0}J_1(a+\ve b)=\int_0^T\!\!\!\int_{\Ga_0}(u(a)-\ov h)u'(a)[b]\,\d S\d t,
\]
where $u'(a)[b]$ is the directional derivative with respect to $a$ in the direction $b$.
Let $\wt a=a+\ve b$ and $\wt u$ solves problem \eqref{eqn:PDE-uib} with the coefficient $\wt a$.
Then $w:=u'(a)[b]=\lim_{\ve\to0}\ve^{-1}(\wt u-u)$. Upon subtracting the equations for $\wt u$
and $u$ and then taking limits, we get
\[
\begin{cases}
\pa_t^\al w-\nb\cdot (a\nb w)=\nb\cdot(b\nb u) & \mbox{in }\Om\times(0,T],\\
a\pa_\nu w=-b\pa_\nu u & \mbox{in }\pa\Om\times(0,T],\\
w(0)=0 & \mbox{in }\Om.
\end{cases}
\]
Multiplying the equation for $w$ with any $\psi\in L^2(0,T;H^1(\Om))$
and integrating over $\Om\times(0,T)$ give
\begin{align}\label{eq:weak u_delta}
\int_0^T\!\!\!\int_\Om(\psi\,\pa_t^\al w+a\nb w\cdot\nb\psi)\,\d x\d t=-\int_0^T\!\!\!\int_\Om b\nb u\cdot\nb\psi\,\d x\d t.
\end{align}
Let $v$ be the solution of problem \eqref{adjoint}. Multiplying the governing equation
for $v$ with a test function $\psi$ and integrating over $\Om\times(0,T)$ give
\begin{align}\label{eq:weak adjoint}
\int_0^T\!\!\!\int_\Om\left(\psi\,D_{T-}^\al v+a\nb v\cdot\nb\psi\right)\d x\d t=\int_0^T\!\!\!\int_{\Ga_0}(u-\ov h)\psi\,\d S\d t.
\end{align}
Note that the following integration by parts formula for fractional derivatives:
\begin{align}\label{eq:integral by parts}
\int_0^T v\,\pa_t^\al w\,\d t=\left[w\,J_{T-}^{1-\al}v\right]_{t=0}^{t=T}+\int_0^T w\,D_{T-}^\al v\,\d t=\int_0^T w\,D_{T-}^\al v\,\d t
\end{align}
(for suitably smooth $v$ and $w$ with $w(0)=0$ and $ J_{T-}^{1-\al}v=0$).
Now by choosing $\psi=v$ in \eqref{eq:weak u_delta}, $\psi=w$ in \eqref{eq:weak adjoint} and applying \eqref{eq:integral by parts}, we obtain
\[
-\int_0^T\!\!\!\int_\Om b\nb u\cdot\nb v\,\d x\d t=\int_0^T\!\!\!\int_{\Ga_0}(u-\ov h)w\,\d S\d t,
\]
implying $\f\d{\d a}J_1(a)=-\int_0^T\nb u\cdot \nb v\d t$. For the term $J_2$, the directional derivative along $b$ is
\begin{align*}
&\left.\f\d{\d\ve}\right|_{\ve=0}\int_\Om|\nb (a+\ve b)|\,\d x =\int_\Om\left.\f\d{\d\ve}\right|_{\ve=0}\left(|\nb (a+\ve b)|^2\right)^{1/2}\d x\\
 =&\int_\Om\left.\left(|\nb (a+\ve b)|^2\right)^{-\f12}\right|_{\ve=0}\nb a\cdot\nb b\,\d x=\int_\Om\f{\nb a}{|\nb a|}\cdot\nb b\,\d x,
\end{align*}
and hence we have $\f\d{\d a}J_2(a)=-\be\nb\cdot(\f{\nb a}{|\nb a|})$.
\end{proof}

By the chain rule, the derivatives of $J$ with respect to $a_1$, $a_2$ and $\phi$ are given by
\begin{align*}
\f{\pa J}{\pa\phi} & =\f{\d J}{\d a}\f{\pa a}{\pa\phi}=\f{\d J}{\d a}(a_1-a_2)\de(\phi),\\
\f{\pa J}{\pa a_1} & =\int_\Om\f{\d J}{\d a}\f{\pa a}{\pa a_1}\,\d x=\int_\Om\f{\d J}{\d a}H(\phi)\,\d x,\\
\f{\pa J}{\pa a_2} & =\int_\Om\f{\d J}{\d a}\f{\pa a}{\pa a_2}\,\d x=\int_\Om\f{\d J}{\d a}(1-H(\phi))\,\d x,
\end{align*}
where $\de$ is the Dirac delta function. Hence the iterative scheme for updating $a_1$, $a_2$ and $\phi$ reads
\[
\phi^{k+1}=\phi^k-\ga^k{ \f{\pa J}{\pa\phi}}(\phi^k,a_1^k,a_2^k)\quad\mbox{and}\quad
a_j^{k+1}=a_j^k-\ga_j^k{ \f{\pa J}{\pa a_j}}(\phi^{k+1},a_1^k,a_2^k),\quad j=1,2.
\]
The step sizes $\ga^k$ and $\ga_j^k$ can be either fixed or obtained by means of line search. The implementation
of the method requires some care. First, we approximate the delta function $\de(x)$ and Heaviside
function $H(x)$ by
\[
 \de_\ve(x)=\f\ve{\pi (x^2+\ve^2)}\quad\mbox{and}\quad
H_\ve(x)=\f1\pi\arctan\left(\f x\ve\right)+\f12,
\]
respectively, with $\ve>0$ of order of the mesh size \cite{ChanTai:2004,ChungChanTai:2005}.
Second, during the iteration, the new iterate of $\phi$ may fail to be a
signed distance function. Although one is only interested in $\mathrm{sign}(\phi)$, it is
undesirable for $|\phi|$ to get too large near the interface. Thus we reset $\phi$ to a
signed distance function whenever $\phi$ changes by more than $10\%$ in the relative $L^2(\Om)$-norm.
The resetting procedure is to find the steady solution of the following equation
\cite{OsherFedkiw:2001,ChungChanTai:2005}:
\[
\pa_t d+\mathrm{sign}(d)(|\nb d|-1)=0,\quad\mbox{with }d(0)=\phi.
\]

\section{Numerical Experiments and Discussions}\label{sec:numer}
Now we present numerical results for reconstructing the fractional order $\al$ and piecewise
constant diffusion coefficient $a$, with unknown $u_0$ and $f$. In all experiments, the domain
$\Om$ is taken to be the unit square $\Om=(0,1)^2$, and the final time $T=1$. We divide
the domain $\Om$ into uniform squares with a length $h=0.02$ and then divide along the diagonals
of each square. We discretize the time interval $[0,T]$ into uniform subintervals with a time
step size $\tau=0.01$. All direct and adjoint problems are solved by standard continuous
piecewise linear Galerkin finite element method in space and backward Euler convolution quadrature in time (see e.g.,\cite{JinLazarovZhou:2019overview} and \cite[Chapters 2 and 3]{JinZhou:2023book}). Below we investigate the following four cases:
\begin{enumerate}
\item[(i)] $D$ is a disc with radius $\frac13$, centered at $(\frac12,\frac12)$,
\item[(ii)] $D$ is a square with length $\frac12$, centered at $(\frac12,\frac12)$,
\item[(iii)] $D$ is a concave polygon, and
\item[(iv)] $D$ is two discs with radius $\frac15$, centered  at $(\frac14,\frac12)$ and $(\frac34,\frac12)$, respectively.
\end{enumerate}
Throughout, the unknown initial condition $u_0$ and source $f$ are fixed as
\[
u_0(x_1,x_2)=x_1^2x_2^2(1-x_1)^2(1-x_2)^2\quad\mbox{and}\quad f(x_1,x_2)=1+x_1+z_2,
\]
respectively. Meanwhile, we fix the exact fractional order $\al^\dag=0.8$ and the diffusion coefficient
$a^\dag=10-9\chi_D$, i.e. $a_1=1$, $a_2=10$. Unless otherwise stated, the Neumann
excitation $g$ is taken as $g(y,t)=\eta(y)\chi_{[0.5,1]}(t)$, where $\eta$ is the cosine
function with a frequency $2\pi$ on each edge for cases (i)--(iii) and is constant $1$ for case (iv).
We set $g$ on $\pa\Om\times[0,T]$, and take the measurement
$h$ on $\pa\Om\times[0,T]$.

\begin{table}[hbt!]\centering
\caption{The recovered order $\al$ based on least-squares fitting.\label{tab:al}}
\begin{threeparttable}
\subfigure[case (i)]{
\begin{tabular}{cccc}
\toprule
$t_0\backslash\al$ &0.3000 &0.5000 &0.8000\\
\midrule
1e-3 & 0.2402 & 0.5289 & 0.8353 \\
1e-4 & 0.2516 & 0.5244 & 0.8795\\
1e-5 & 0.2649 & 0.4994 & 0.8006\\
1e-6 & 0.2712 & 0.4637 & 0.7978\\
1e-7 & 0.2665 & 0.5267 & 0.8019\\
1e-8 & 0.2558 & 0.4913 & 0.7989\\
1e-9 & 0.2744 & 0.4925 & 0.7999 \\
\bottomrule
\end{tabular}}
\subfigure[case (ii)]{
\begin{tabular}{cccc}
\toprule
$t_0\backslash\al$ &0.3000 &0.5000 &0.8000\\
\midrule
1e-3 & 0.2380 & 0.5243 & 0.8350 \\
1e-4 & 0.2479 & 0.5239 & 0.8797\\
1e-5 & 0.2612 & 0.5022 & 0.7803\\
1e-6 & 0.2695 & 0.5182 & 0.7977\\
1e-7 & 0.2662 & 0.5279 & 0.8019\\
1e-8 & 0.2562 & 0.4914 & 0.7989\\
1e-9 & 0.2741 & 0.4925 & 0.7999 \\
\bottomrule
\end{tabular}}\\
\subfigure[case (iii)]{
\begin{tabular}{cccc}
\toprule
$t_0\backslash\al$ &0.3000 &0.5000 &0.8000\\
\midrule
1e-3 & 0.2383 & 0.5214 & 0.8485 \\
1e-4 & 0.2480 & 0.5198 & 0.8821\\
1e-5 & 0.2600 & 0.5098 & 0.8005\\
1e-6 & 0.2667 & 0.5213 & 0.7977\\
1e-7 & 0.2634 & 0.5273 & 0.8019\\
1e-8 & 0.2654 & 0.4913 & 0.7990\\
1e-9 & 0.2718 & 0.4925 & 0.7999 \\
\bottomrule
\end{tabular}}
\subfigure[case (iv)]{
\begin{tabular}{cccc}
\toprule
$t_0\backslash\al$ &0.3000 &0.5000 &0.8000\\
\midrule
1e-3 & 0.2384 & 0.5247 & 0.8436 \\
1e-4 & 0.2486 & 0.5221 & 0.8816\\
1e-5 & 0.2617 & 0.5033 & 0.8005\\
1e-6 & 0.2692 & 0.5178 & 0.7977\\
1e-7 & 0.2650 & 0.5273 & 0.8019\\
1e-8 & 0.2703 & 0.4913 & 0.7989\\
1e-9 & 0.2740 & 0.4925 & 0.7999 \\
\bottomrule
\end{tabular}}
\end{threeparttable}
\end{table}

First, we show the numerical recovery of the fractional order $\al$ for three different values, i.e., 0.3, 0.5 and 0.8. In view of Proposition \ref{prop:asym of h}, it suffices to fix one point $y_0\in\pa\Om$ (which is fixed at the origin $y_0=(0,0)$ below) and to minimize problem \eqref{min al}, for which we use the L-BFGS-B with constraint $\al\in[0,1]$ \cite{ByrdNocedalZhu:1995}. The recovered orders are presented in Table \ref{tab:al}. Note that the least-squares functional has many local minima. Hence, the algorithm requires a good initial guess to get a correct value for $\al$. It is observed that the reconstruction is more accurate when $t_0\to0^+$, since the high order terms are then indeed negligible. Also, for a fixed interval $(0,t_0)$, due to the asymptotic behavior, we have slightly better approximations when the true order $\al$ is large. However, this does not influence much the reconstruction results for cases (i)--(iv), since the coefficient $a$ is constant near origin.

Now we apply analytic continuation to extend the observed data $h$ by a rational function $h_r$ from the interval $[0,0.5]$ to $[0,1]$, using
the AAA algorithm \cite{NakatsukasaTrefethen:2018} with degree $r=4$. This step is essential for dealing with missing data $u_0$ and $f$:  subtracting $h_r$ from $h$ yields the reduced data $\ov h$ for a given $g$ and $u_0=f\equiv0$, which is then used in recovering $a$. Fig. \ref{Fig:ana_error} shows the $L^2(\pa\Om)$ error between $h_r$ and the exact data $h_0$ which is obtained by solving \eqref{eqn:fde} with given $g$ and vanishing $u_0$ and $f$. Note that higher order rational approximations can reduce the error over the interval $[0,0.5]$, but it tends to lead to larger errors in the interval $[0.5,1]$. The approach is numerically sensitive to the presence of data noise, reflecting the well-known severe ill-posed nature of analytic continuation.

\begin{figure}[hbt!]
\begin{tabular}{cccc}
\includegraphics[width=0.23\textwidth]{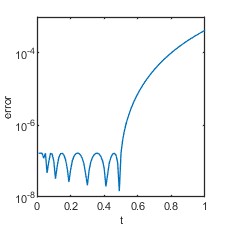}
&\includegraphics[width=0.23\textwidth]{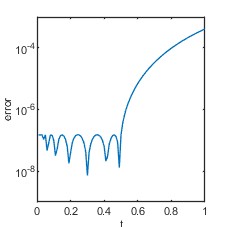}
&\includegraphics[width=0.23\textwidth]{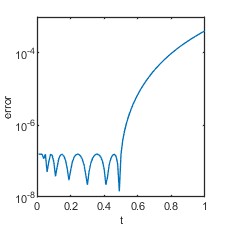}
&\includegraphics[width=0.23\textwidth]{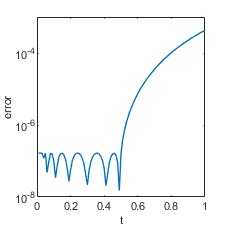}
\end{tabular}
\caption{The $L^2(\pa\Om)$-error between the analytic continuation $h_r$ and true data $h_0$ for cases (i)--(iv).}
\label{Fig:ana_error}
\end{figure}

Finally, we present recovery results for the piecewise constant coefficient $a$, or equivalently, the shape $D$. The exact value is $1$ inside the inclusion $D$ and $10$ outside, unless otherwise stated. We use the standard gradient descent method to minimize problem \eqref{min phi}. Unless otherwise stated, we fix the step sizes $\ga^k\equiv1$, $\ga_1^k\equiv0$, $\ga_2^k\equiv0$, i.e., fixing the values inside and outside the inclusion $D$. The regularization parameter $\be$ is chosen to be $10^{-8}$, and the coefficients $a_1$ and $a_2$ are set to $a_1=0.9$ and $a_2=10$. The results are summarized in Figs. \ref{Fig:disc}-\ref{Fig:noise}, where dashed lines denote the recovered interfaces.

\begin{figure}[hbt!]\centering
\begin{tabular}{ccc}
\includegraphics[width=0.3\textwidth]{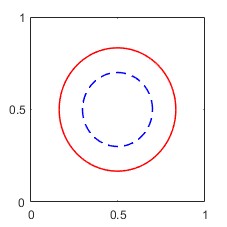}
&\includegraphics[width=0.3\textwidth]{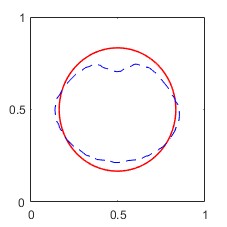}
&\includegraphics[width=0.3\textwidth]{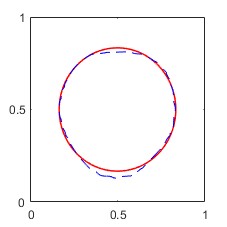}\\
\includegraphics[width=0.3\textwidth]{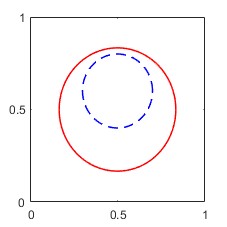}
&\includegraphics[width=0.3\textwidth]{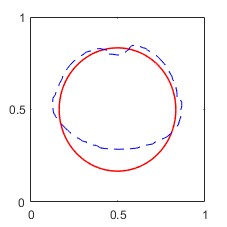}
&\includegraphics[width=0.3\textwidth]{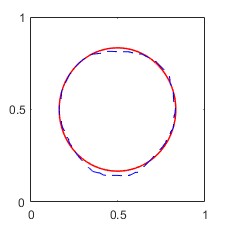}
\end{tabular}
\caption{The reconstructions of the interface for case (i) at iteration $0$, $100$ and $10000$ from left to right, with two different initial guesses.}
\label{Fig:disc}
\end{figure}

Fig. \ref{Fig:disc} shows the result for case (i), when the initial guesses are a small circle
but with two different centers. In either case, the algorithm can successfully reconstruct the exact circle
after $10000$ iterations.  For case (ii), the exact interface is a square, again with the initial guess being small circles inside
the square, cf. Fig. \ref{Fig:square}. The algorithm accurately recovers the four edges of the square. However, due to the non-smoothness,
the corners are much more challenging to reconstruct and hence less accurately resolved. These
results indicate that the method does converge with a reasonable initial guess, but it may
take many iterations to yield satisfactory reconstructions. Fig. \ref{Fig:concave}
shows the results for case (iii) for which the exact interface is a concave polygon,
 which is much more challenging to resolve. Nonetheless, the
algorithm can still recover the overall shape of the interface. The reconstruction
around the concave part has lower accuracy. To the best of our knowledge,
 the unique determination of a concave polygonal inclusion (in an elliptic equation) is still open. Fig.
 \ref{Fig:two_disc} shows the results for case
(iv) which contains two discs as the exact interface. The initial guess is two small
discs near the boundary $\partial\Omega$. Note that in this case, we choose the boundary
data $\eta\equiv1$ in order to strengthen the effect of inhomogeneity. The final
reconstruction is very satisfactory.

\begin{figure}[hbt!]\centering
\begin{tabular}{ccc}
\includegraphics[width=0.3\textwidth]{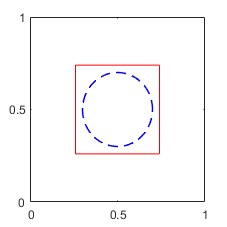}
&\includegraphics[width=0.3\textwidth]{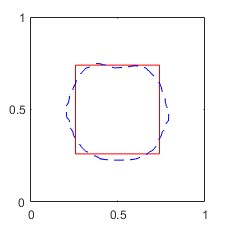}
&\includegraphics[width=0.3\textwidth]{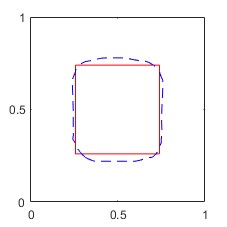}\\
\includegraphics[width=0.3\textwidth]{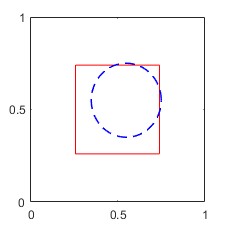}
&\includegraphics[width=0.3\textwidth]{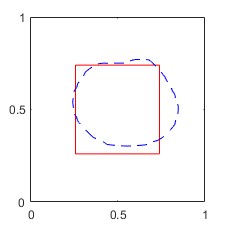}
&\includegraphics[width=0.3\textwidth]{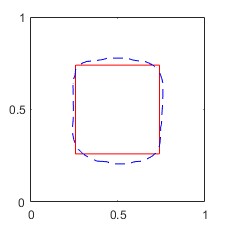}
\end{tabular}
\caption{The reconstructions of the interface for case (ii) with different initial guesses at iteration $0$, $100$ and $8000$ from left to right.}
\label{Fig:square}
\end{figure}

\begin{figure}[hbt!]\centering
\begin{tabular}{ccc}
\includegraphics[width=0.3\textwidth]{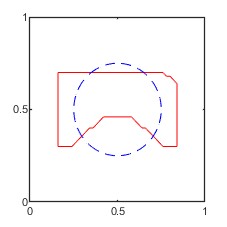}
&\includegraphics[width=0.3\textwidth]{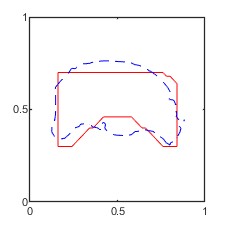}
&\includegraphics[width=0.3\textwidth]{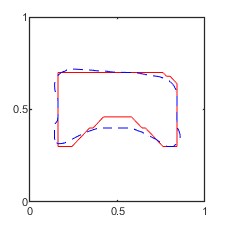}
\end{tabular}
\caption{The reconstructions of the interface for case (iii) at iteration $0$, $100$ and $8000$ from left to right.}
\label{Fig:concave}
\end{figure}

\begin{figure}[hbt!]\centering
\begin{tabular}{ccc}
\includegraphics[width=0.3\textwidth]{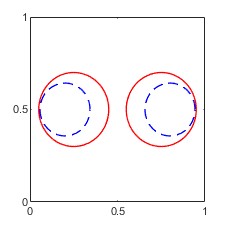}
&\includegraphics[width=0.3\textwidth]{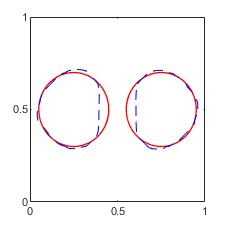}
&\includegraphics[width=0.3\textwidth]{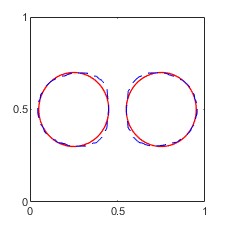}
\end{tabular}
\caption{The reconstructions of the interface for case (iii) at iteration $0$, $1000$ and $15000$ from left to right.}
\label{Fig:two_disc}
\end{figure}

Fig. \ref{Fig:top} shows a variant of case (ii), with the initial interface being
two disjoint discs. It is observed that the two discs first merge into one concave
contour, and then it evolves slowly to resolve the square. This shows one distinct feature
of the level set method, i.e., it allows topological changes. Due to the complex evolution, the
algorithm takes many more iterations to reach convergence (i.e., $30000$ iterations
versus $8000$ iterations in case (ii)).

\begin{figure}[hbt!]\centering
\begin{tabular}{ccc}
\includegraphics[width=0.3\textwidth]{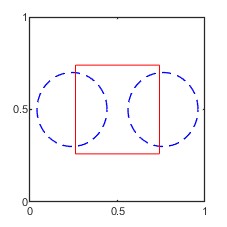}
&\includegraphics[width=0.3\textwidth]{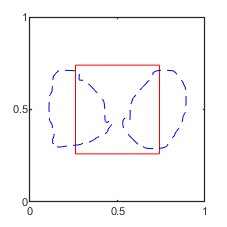}
&\includegraphics[width=0.3\textwidth]{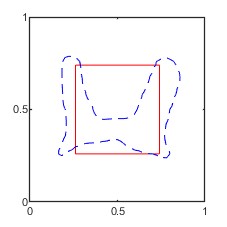}\\
\includegraphics[width=0.3\textwidth]{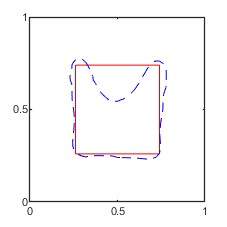}
&\includegraphics[width=0.3\textwidth]{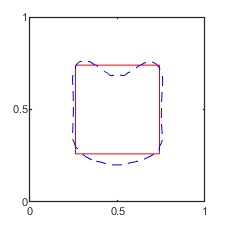}
&\includegraphics[width=0.3\textwidth]{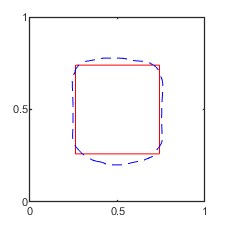}
\end{tabular}

\caption{The reconstruction of the interface for case (ii) with a different initial guess, at different iterations $0$, $100$, $1000$, $10000$, $20000$ and $30000$ (from left to right).}
\label{Fig:top}
\end{figure}

Fig. \ref{Fig:interface_a1} shows a case which aims at simultaneously recovering the interface and the
diffusivity value inside the inclusion, for which the exact interface is a square and the exact values
of $a_1$ and $a_2$ are $1$ and $10$, respectively. In the experiment, we take two different initial
guesses. The initial value of $a_1$ for both cases is $a_1=1.2$, and we take the step sizes $\ga^k\equiv1$,
$\ga_1^k\equiv10$ and $\ga_2^k\equiv0$. The recovered value $a_1$ is $0.92$ for the first row and $0.89$
for the second row. It is observed that for both cases, one can roughly recover the interface. These
experiments clearly indicate that the level set method can accurately recover the
interface $D$. However, it generally takes many iterations to obtain satisfactory results. This is attributed
partly to topological changes and the presence of nonsmooth points, and partly to the direct gradient
flow formulation. Indeed, one observes from Proposition \ref{prop:grad} that the gradient field for
updating the level set function is actually not very smooth, which hinders the rapid evolution of the interface. Hence, there
is an imperative need to accelerate the method, especially via suitable preconditioning and post-processing
\cite{ItoKunischLi:2001}.

\begin{figure}[hbt!]\centering
\begin{tabular}{cc}
\includegraphics[width=0.3\textwidth]{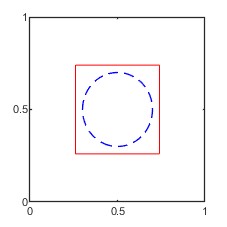}&
\includegraphics[width=0.3\textwidth]{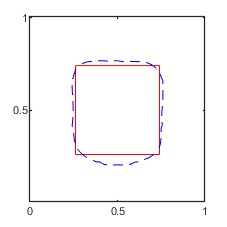}\\
\includegraphics[width=0.3\textwidth]{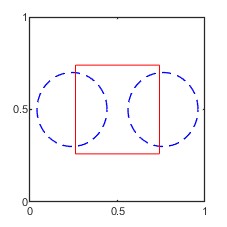} &
\includegraphics[width=0.3\textwidth]{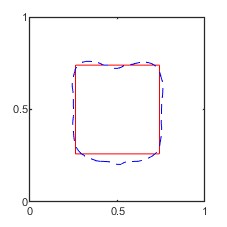}
\end{tabular}
\caption{The reconstructions for case (ii) with a non-fixed diffusivity value $a_1$. Left column: initial guess. Right column: recovered interface.}
\label{Fig:interface_a1}
\end{figure}

Last, Fig. \ref{Fig:noise} shows reconstruction results with noisy data. Due to the instability of
analytic continuation for noisy data, we use boundary data corresponding to zero $u_0$, $f$ as our
measurement and only focus on reconstructing $a$. That is, we denote $h^*$
the solution of problem \eqref{eqn:fde} with  $u_0\equiv0$ and $f\equiv0$ which plays the role of
$\ov h$. The noisy measurement $h^\de$ is generated by
\[
h^\de(y,t)=h^*(y,t)+\ve\| h^*\|_{L^\infty(\pa\Om\times[0,1])}\xi(y,t),
\]
where $\ve>0$ denotes the relative noise level, and $\xi$ follows the standard Gaussian distribution.
We take the exact interface as a concave polygon and the initial guess is a circle; see the
left panel in Fig. \ref{Fig:concave}. We consider two different noise levels and three different
input boundary data. The first and second rows in Fig. \ref{Fig:noise} are for $1\%$ and $5\%$ noise,
obtained with a regularization parameter $\be=1\times10^{-7}$ and $\be=5\times10^{-7}$,
respectively. We consider three input Neumann data $g_1$, $g_2$ and $g_3$: $g_1=g$ (i.e.,
identical as before), and $g_2$ and $g_3$ are given by
\begin{align*}
g_2(x,t) & =\eta_1(x)\chi_{[0.25,1]}(t)+\eta_2(x)\chi_{[0.5,1]}(t)+\eta_3(x)\chi_{[0.75,1]}(t),\\
g_3(x,t) & =\eta_1(x)\chi_{[1/6,1]}(t)+\eta_2(x)\chi_{[2/6,1]}(t)+\eta_3(x)\chi_{[3/6,1]}(t)+\eta_4(x)\chi_{[4/6,1]}(t)+\eta_5(x)\chi_{[5/6,1]}(t),
\end{align*}
where $\eta_n$ ($n=1,\dots,5$) is a cosine function with frequency $2n\pi$ on each edge.
The inputs $g_2$ and $g_3$ contain higher frequency information and are designed to examine
the influence of boundary excitation on the reconstruction.
Fig. \ref{Fig:noise} shows that with the knowledge of $h^*$, the method for
recovering the interface is largely stable with respect to the presence of data noise.
With more frequencies in the input excitation, the reconstruction
results would improve slightly. This agrees with the observation that
the concave shape contains more high-frequency information.

\begin{figure}[hbt!]\centering
\begin{tabular}{ccc}
\includegraphics[width=0.3\textwidth]{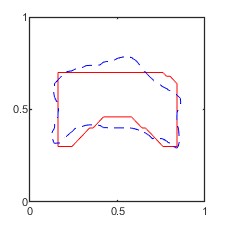}
&\includegraphics[width=0.3\textwidth]{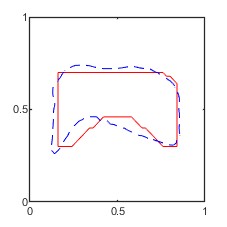}
&\includegraphics[width=0.3\textwidth]{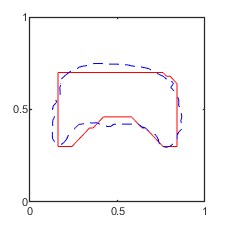}\\
\includegraphics[width=0.3\textwidth]{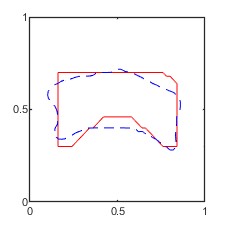}
&\includegraphics[width=0.3\textwidth]{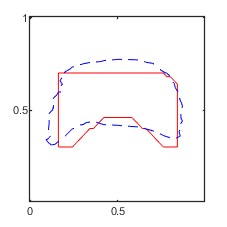}
&\includegraphics[width=0.3\textwidth]{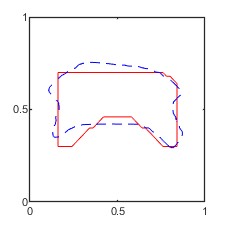}
\end{tabular}
\caption{The reconstruction for case (iii) with noisy data and different
boundary excitations $g_1$, $g_2$ and $g_3$ (from left to right). The
top and bottom rows are for noise levels $1\%$ and $5\%$.}
\label{Fig:noise}
\end{figure}

\section{Concluding remarks}
In this work have studied a challenging inverse problem of recovering multiple coefficients
from one single boundary measurement, in a partially unknown medium, due to the formal under-determined
nature of the problem. We have presented two uniqueness results, i.e., recovering
the order and the piecewise constant diffusion coefficient from a fairly general Neumann input data and
 recovering the order and two distributed parameters from a fairly specialized
Neumann input data (in the appendix). For the former, we have also developed a practical reconstruction
algorithm based on asymptotic expansion, analytic continuation and level set method, which is inspired
by the uniqueness proof, and have
presented extensive numerical experiments to showcase the feasibility of the approach.

There remain many important issues to be resolved. Numerically, the overall algorithmic pipeline
works well for exact data. However, analytic continuation with rational functions is
sensitive with respect to the presence of data noise. Thus it is of much interest to develop
one-shot reconstruction algorithms. The main challenge lies in unknown medium properties,
i.e., missing data, which precludes a direct application of many standard regularization
techniques. It is of much interest to develop alternative approaches for problems with missing
data. The level set method does give excellent reconstructions, but it may take many iterations
to reach convergence. The acceleration of the method, e.g., via preconditioning, is imperative.
Theoretically the specialized Neumann input is very powerful. However, the numerical realization
is very challenging. It would also be interesting to develop alternative numerically feasible
yet more informative excitations for recovering more general coefficients than polygonal inclusions.

\appendix
\section{Recovery of two general coefficients}\label{sec:uniqueness-general}
In this appendix, we discuss the unique recovery of general coefficients mentioned in Remark
\ref{rem:general-a}. In this setting, we have $g\in C^2(\ov\BR_+;H^\frac{1}{2}(\pa\Om))$
with support in $\Ga_1\times\ov\BR_+$. Moreover, $g$ is piecewise constant in time $t$ and $g\equiv0$
when $t\le T_0$, $g$ is constant when $t\ge T_1$. The proof relies on the representation of
the data $h$, similar to Corollary \ref{analytic of h} and hence we omit the proof. Note
that $g$ is a space-time dependent series. We may write $h=h_i+\sum_{k=1}^\infty h_{b,k}$ to
distinguish the contributions from $u_0$ and $f$, and $g$ (with $g_k(t):=\chi b_k\psi_k(t)\eta_k$)
\begin{align*}
h_i(t) & :={\rho_0+\rho_1t^\al+\sum_{n=2}^\infty}\rho_n E_{\al,1}(-\la_n t^\al),\\
h_{b,k}(t) & :=\sum_{n=1}^\infty\sum_{j=1}^{m_n}\int_0^t(t-s)^{\al-1}E_{\al,\al}(-\la_n(t-s)^\al)\langle g_k(s),\vp_{n,j}\rangle\,\d s\,\vp_{n,j}.
\end{align*}

\begin{proposition}\label{prop:sol-rep-gen}
For $u_0,f\in L^2(\Om)$ and $\eta_k\in H^\frac{1}{2}(\pa\Om)$,
the data $h=u|_{\pa\Om\times(0,T)}$ to problem \eqref{eqn:fde} can be represented by
\begin{align*}
h(t)=& \rho_0+\rho_1t^\al+\sum_{n=2}^\infty\rho_n E_{\al,1}(-\la_n t^\al)\\
 &+\sum_{n=1}^\infty\sum_{j=1}^{m_n}\int_0^t(t-s)^{\al-1}E_{\al,\al}(-\la_n(t-s)^\al)\langle g(s),\vp_{n,j}\rangle\,\d s\,\vp_{n,j},
\end{align*}
with $\rho_n$ defined in \eqref{eqn:varrho}. Moreover, the following statements hold.
\begin{enumerate}
\item[(i)] $h_i\in C^\om(0,\infty;L^2(\pa\Om))$ and $h_{b,k}\in C^\om(t_{2k}+\ve,\infty;L^2(\pa\Om))$  with an arbitrarily fixed $\ve>0$.
\item[(ii)] The Laplace transforms  $\wh h_i(z)$ and $\wh h_{b,k}(z)$ of $h_i$ and $h_{b,k}$ in $t$ exist and are given by
\begin{align*}
\wh h_i(z)&= \rho_0z^{-1}+\Ga(1+\al)\rho_1z^{-1-\al}+\sum_{n=2}^\infty\f{\rho_kz^{\al-1}}{z^\al+\la_k},\\
\wh h_{b,k}(z)&=\sum_{n=1}^\infty\sum_{j=1}^{m_n}\f{\langle\wh g_k(z),\vp_{n,j}\rangle\vp_{n,j}}{z^\al+\la_n}.
\end{align*}
\end{enumerate}
\end{proposition}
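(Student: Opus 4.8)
The plan is to reproduce, for the general operator \eqref{eq:general coeff} and the series excitation \eqref{eq:g_series}, the three-stage argument used in Proposition \ref{reps of ui}, Proposition \ref{analytic of u} and Corollary \ref{analytic of h}. First I would obtain the solution representation by separation of variables. Since here $a\in C^2(\ov\Om)$ and $q\in L^\infty(\Om)$ are smooth, the eigenpairs $\{(\la_n,\vp_{n,j})\}$ of the $L^2(\Om)$ realization of $\cA$ form a complete orthonormal system and, in contrast with the piecewise-constant case, no transmission condition \eqref{eqn:trans-cond} is needed when integrating by parts. Projecting \eqref{eqn:fde} onto $\vp_{n,j}$ reduces each mode to the scalar fractional ODE $(\pa_t^\al+\la_n)u^{n,j}=(f,\vp_{n,j})+\langle g,\vp_{n,j}\rangle$, solved termwise by Mittag-Leffler functions via \cite[Proposition 4.5]{Jin:book2021}; writing $g=\sum_k g_k$ and using linearity yields the stated formula for $h(t)$ and the splitting $h=h_i+\sum_k h_{b,k}$, with $\rho_n$ as in \eqref{eqn:varrho}.

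For part (i), the analyticity of $h_i$ follows verbatim from Proposition \ref{analytic of u}(i): Lemma \ref{lem:ML-asymp} gives a uniform bound $\|u_i(z)\|_{\mathrm{Dom}(A)}\le c|z|^{-\al}(\|u_0\|_{L^2(\Om)}+\|f\|_{L^2(\Om)})+c\|f\|_{L^2(\Om)}$ on a sector $\Si_\te$, so the series is $\mathrm{Dom}(A)$-valued analytic on $(0,\infty)$ and the trace theorem gives $h_i\in C^\om(0,\infty;L^2(\pa\Om))$. The hard part is the analyticity of each $h_{b,k}$ past $t_{2k}$. The key structural fact I would exploit is that $\psi_k$ vanishes on $[0,t_{2k-1}]$ and is constant on $[t_{2k},\infty)$, so $\psi_k'$ is supported in $[t_{2k-1},t_{2k}]$. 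Writing $\langle g_k(s),\vp_{n,j}\rangle=\psi_k(s)\langle\chi b_k\eta_k,\vp_{n,j}\rangle$ and integrating by parts against the Mittag-Leffler kernel through the identity \eqref{eqn:mlf-diff}, exactly as in Proposition \ref{analytic of u}, splits $u_{b,k}^{n,j}$ into a term proportional to $\psi_k(t)$ (hence constant for $t>t_{2k}$) and a term $-\la_n^{-1}\langle\chi b_k\eta_k,\vp_{n,j}\rangle\int_{t_{2k-1}}^{t_{2k}}E_{\al,1}(-\la_n(t-s)^\al)\psi_k'(s)\,\d s$. For $t$ in $t_{2k}+\ve+\Si_\te$ the argument $t-s$ stays bounded away from $0$, so $E_{\al,1}(-\la_n(t-s)^\al)$ extends analytically into $t_{2k}+\ve+\Si_\te$ and obeys the decay of Lemma \ref{lem:ML-asymp}.

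To complete part (i) I would identify the constant term with the trace of the elliptic solution $U_k$ of $\cA U_k=0$, $a\pa_\nu U_k=\chi b_k\eta_k$, via $\langle\chi b_k\eta_k,\vp_{n,j}\rangle=\la_n(U_k,\vp_{n,j})$; since $\eta_k\in H^\frac12(\pa\Om)$ and the coefficients are smooth, elliptic regularity gives $U_k\in H^2(\Om)\subset\mathrm{Dom}(A)$, so the constant term is a time-independent, hence analytic, $L^2(\pa\Om)$-valued function. For the remaining term, the pointwise bound $|E_{\al,1}(-\la_n(z-s)^\al)|\le c/(\la_n|z-t_{2k}|^\al)$ from Lemma \ref{lem:ML-asymp} together with $\sum_n\la_n^{-2}\sum_j|\langle\chi b_k\eta_k,\vp_{n,j}\rangle|^2=\|U_k\|_{L^2(\Om)}^2<\infty$ bounds its $\mathrm{Dom}(A^{\frac14+\ve})$-norm by $c|z-t_{2k}|^{-\al}$ on $t_{2k}+\ve+\Si_\te$; the series then converges uniformly and is analytic there, and the embedding $\mathrm{Dom}(A^{\frac14+\ve})\hookrightarrow H^{\frac12+2\ve}(\Om)$ followed by the trace theorem gives $h_{b,k}\in C^\om(t_{2k}+\ve,\infty;L^2(\pa\Om))$.

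For part (ii), the estimates above yield exponential bounds $\|\e^{-tz}u_i(t)\|_{\mathrm{Dom}(A)}\le c\,\e^{-t\Re(z)}(t^{-\al}+1)$ and $\|\e^{-tz}u_{b,k}(t)\|_{\mathrm{Dom}(A^{\frac14+\ve})}\le c\,\e^{-t\Re(z)}|t-t_{2k}|^{-\al}$, both integrable over $(0,\infty)$ for $\Re(z)>0$, so Lebesgue's dominated convergence justifies taking the Laplace transform termwise. The standard identities for $\mathcal{L}[E_{\al,1}(-\la t^\al)]$ and $\mathcal{L}[t^{\al-1}E_{\al,\al}(-\la t^\al)]$, combined with the convolution theorem for $h_{b,k}$, then give the claimed formulas for $\wh h_i$ and $\wh h_{b,k}$ after taking traces. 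The main obstacle throughout is part (i): isolating the temporal nonsmoothness of the excitation into the compact interval $[t_{2k-1},t_{2k}]$ so that, for observation times beyond $t_{2k}$, the Mittag-Leffler kernel is evaluated away from its branch point, and then securing uniform-in-mode analytic bounds in the fractional-power space $\mathrm{Dom}(A^{\frac14+\ve})$ dictated by the weaker $H^\frac12(\pa\Om)$ regularity of the excitation profiles $\eta_k$.
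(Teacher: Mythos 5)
Your proposal is correct and follows exactly the route the paper intends: the paper omits the proof of Proposition \ref{prop:sol-rep-gen}, stating only that it is analogous to Propositions \ref{reps of ui}--\ref{analytic of u} and Corollary \ref{analytic of h}, and your three-stage argument (mode-wise separation of variables; integration by parts against the Mittag-Leffler kernel via \eqref{eqn:mlf-diff} to isolate $\mathrm{supp}\,\psi_k'\subset[t_{2k-1},t_{2k}]$ and hence gain analyticity past $t_{2k}$; uniform bounds through the elliptic lift $U_k$ in $\mathrm{Dom}(A^{\frac14+\ve})$, followed by the trace theorem and termwise Laplace transform) is precisely that adaptation. The only deviations are simplifications you correctly identify: the smooth coefficients remove the need for the transmission condition \eqref{eqn:trans-cond} and for restricting to the subdomain $\Om'$.
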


Now we can state the main result of this part. First, we uniquely determine the fractional order $\al$ using
the data near $t=T_0$, and then use the special boundary excitation $g$ to determine the coefficients $a$ and $q$.
The proof of part (i) is identical with that for Theorem \ref{thm:unique al}, and hence omitted. The unique
determination of $a$ and $q$ is proved below.

\begin{theorem}\label{thm:general}
Let $\al,\wt\al\in(0,1)$, $(a,q,f,u_0),(\wt a,\wt q,\wt f,\wt u_0)\in C^2(\ov\Om)
\times L^\infty(\Om)\times L^2(\Om)\times L^2(\Om)$ and fix $g$ as \eqref{eq:g_series}.
Let $h$ and $\wt h$ be the corresponding Dirichlet data, and let $\si\in(0,T_0]$ be fixed.
\begin{enumerate}
\item[(i)] The condition $h=\wt h$ on $\Ga_2\times[T_0-\si,T_0]$ implies $\al=\wt\al$, $\rho_0=\wt\rho_0$ and $\{(\rho_k,\la_k)\}_{k\in\BK}=\{(\wt\rho_k,\wt\la_k)\}_{k\in\wt\BK}$, if $\BK,\wt\BK\ne\emptyset$.
\item[(ii)] If either of following conditions is satisfied: {\rm(a)} $q=\wt q$ and $a-\wt a=|\nb a-\nb\wt a|=0$ on the boundary $\pa\Om$ or {\rm(b)} $a=\wt a$, then the condition $h=\wt h$ on $\Ga_0\times[T_0-\si,T]$ implies $(a,q)=(\wt a,\wt q)$.
\end{enumerate}
\end{theorem}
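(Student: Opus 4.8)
The plan is to exploit the special structure of the excitation \eqref{eq:g_series} to manufacture, from the single space–time measurement, an entire $\ze$-family (with $\ze=z^\al$) of elliptic resolvent measurements, and then to reduce the recovery of $(a,q)$ to known uniqueness for the Calder\'on/Borg--Levinson problem. Granting part (i), I first restrict the hypothesis to the subinterval $[T_0-\si,T_0]$ and invoke part (i) to obtain $\al=\wt\al$ together with the matching of the initial/source contributions, so that $h_i\equiv\wt h_i$ for all $t>0$. Subtracting this common part from $h=\wt h$ on the measurement boundary leaves the identity $\sum_{k\ge1}h_{b,k}=\sum_{k\ge1}\wt h_{b,k}$ on $(T_0,T]$.

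The crucial second step is to disentangle the individual layers $h_{b,k}$ using the nested temporal supports. Since $\psi_k\equiv0$ on $[0,t_{2k-1}]$, the layer $h_{b,k}(t)$ vanishes for $t\le t_{2k-1}$, so on the window $(t_{2k-1},t_{2k+1})$ only the first $k$ layers are active. Proceeding inductively, on $(t_1,t_3)$ the identity reads $h_{b,1}=\wt h_{b,1}$, which by the analyticity in Proposition \ref{prop:sol-rep-gen}(i) (valid for $t>t_2+\ve$) continues to all large $t$; subtracting it, on $(t_3,t_5)$ one gets $h_{b,2}=\wt h_{b,2}$, and so on, yielding $h_{b,k}=\wt h_{b,k}$ as analytic functions for every $k$. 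Here I would need to verify that each window $(t_{2k-1},t_{2k+1})$ meets the common analyticity range so the continuation applies at every stage, and that the termwise manipulations are legitimate under the summability $\sum_k b_k\|\psi_k\|_{W^{2,\infty}(\BR_+)}<\infty$.

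Next I take Laplace transforms layer by layer. Using $\wh g_k(z)=\chi b_k\wh\psi_k(z)\eta_k$ and Proposition \ref{prop:sol-rep-gen}(ii), and dividing by the scalar $b_k\wh\psi_k(z)$ (nonzero for small real $z$, since $\psi_k\to p_k>0$), the identity $\wh h_{b,k}=\wh{\wt h}_{b,k}$ becomes, with $\ze=z^\al$,
\[
\sum_{n,j}\f{\langle\chi\eta_k,\vp_{n,j}\rangle\,\vp_{n,j}}{\ze+\la_n}\Big|_{\Ga_0}=\sum_{n,j}\f{\langle\chi\eta_k,\wt\vp_{n,j}\rangle\,\wt\vp_{n,j}}{\ze+\wt\la_n}\Big|_{\Ga_0}.
\]
As in the proof of Theorem \ref{thm:unique q}, pairing with $\vp_{n,j}$ and integrating by parts shows that the left side is the boundary trace on $\Ga_0$ of the solution $v_k(\cdot\,;\ze)$ of the elliptic resolvent problem $\cA v_k+\ze v_k=0$ in $\Om$ with $a\pa_\nu v_k=\chi\eta_k$ on $\pa\Om$. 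Both sides being analytic in $\ze$ off their poles, the identity extends, giving $v_k(\cdot\,;\ze)|_{\Ga_0}=\wt v_k(\cdot\,;\ze)|_{\Ga_0}$ for every $k$ and every admissible $\ze$. Since $\{\eta_k\}$ is dense in $H^{\frac12}(\pa\Om)$ and the resolvent solution map depends continuously on the Neumann datum, this equality of traces passes to all Neumann inputs supported on $\Ga_1$, i.e. the partial Neumann-to-Dirichlet maps of $\cA+\ze$ and $\wt\cA+\ze$ (input on $\Ga_1$, output on $\Ga_0$) coincide for every $\ze$.

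Finally, reading off the poles and residues of these maps in $\ze$ yields matching partial boundary spectral data (eigenvalues $\{\la_n\}=\{\wt\la_n\}$ and the corresponding eigenfunction traces on $\Ga_0$), i.e. Borg--Levinson data. Under hypothesis (a), where $q$ is a priori known and $a,\nb a$ match on $\pa\Om$, the fixed-$\ze$ problem is the Calder\'on conductivity problem and gives $a=\wt a$; under hypothesis (b), where $a=\wt a$ is known, it is the Schr\"odinger potential problem and gives $q=\wt q$. I expect the genuine difficulty to lie precisely in this last elliptic step with partial boundary data, which rests on the deep uniqueness theory for the Calder\'on and multidimensional Borg--Levinson problems rather than on the time-fractional machinery; see the related discussions in \cite{KianLiLiuYamamoto:2021,Kian:2022,CanutoKavian:2001,CanutoKavian:2004}. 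The subdiffusion input merely serves to generate the $\ze$-family of elliptic measurements, so the other place demanding careful bookkeeping is the layer-peeling argument of the second step.
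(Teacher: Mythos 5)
Your first two steps coincide with the paper's argument: the inductive layer-peeling over the nested supports of $\psi_k$ is exactly Lemma \ref{lem:h_k}, and the termwise Laplace transform leading to the resolvent identity is \eqref{eq:Laplace_ug general} (your reinterpretation of the series as the trace of the solution of $\cA v_k+\ze v_k=0$ with Neumann datum $\chi\eta_k$ is correct and equivalent). The gaps are in the final elliptic stage, which you explicitly leave as a black box and for which the mechanism you do sketch is not the one that works. First, ``reading off the poles'' presupposes that no eigenvalue of $A$ is invisible from the measurement set: if $\la_j\notin\{\wt\la_n\}$, the Cauchy integral around $-\la_j$ only tells you that $\sum_{j}\langle\wh g_k,\vp_{n,j}\rangle\vp_{n,j}=0$ on $\Ga_2$ for every $k$. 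To turn this into a contradiction you need (i) linear independence of the traces $\vp_{n,j}|_{\Ga_2}$, which the paper proves as Lemma \ref{lem:linearly_independent} via the unique continuation principle, and (ii) the density of $\{\eta_k\}$ to conclude $\vp_{n,j}=0$ a.e.\ on $\Ga_1$, followed by a second application of unique continuation (using $\pa_\nu\vp_{n,j}=0$) to force $\vp_{n,j}\equiv0$. None of this is automatic, and your proposal skips it.

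Second, the concluding step is not a fixed-$\ze$ Calder\'on problem. A single partial Neumann-to-Dirichlet map at one frequency, with input supported in $\Ga_1$ and output read on $\Ga_2$, is not known to determine a $C^2$ conductivity in this generality, and the paper does not argue that way. What it actually uses is the whole frequency family, i.e.\ boundary \emph{spectral} data: the residues give $\Te_n(y',y)=\sum_j\vp_{n,j}(y')\vp_{n,j}(y)=\wt\Te_n(y',y)$ on $\Ga_1'\times\Ga_2$ only; \cite[Theorem 1.1]{CanutoKavian:2001} is then needed to upgrade this \emph{partial} boundary spectral data to $m_n=\wt m_n$ and $\vp_{n,j}=\wt\vp_{n,j}$ on all of $\pa\Om$ after an orthogonal change of basis; and \cite[Corollary 1.7]{CanutoKavian:2004} (a multidimensional Borg--Levinson theorem) converts the full Dirichlet boundary spectral data into $(a,q)=(\wt a,\wt q)$. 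The hypotheses (a) and (b) of the theorem are precisely the gauge-fixing conditions under which that corollary applies --- the spectral data alone cannot separate $a$ from $q$ because of the Liouville transformation --- and your sketch never explains where these conditions enter. So while your reduction to the elliptic resolvent family is sound and matches the paper, the proof is incomplete without the unique-continuation argument for the eigenvalues and the two Canuto--Kavian results in place of the Calder\'on citation.
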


In the proof of Theorem \ref{thm:general}, we need the following two lemmas.

\begin{lemma}\label{lem:h_k}
The identity $h=\wt h$ on $\Ga_2\times[T_0-\si,T_1]$ implies
\begin{align}\label{goal1}
h_k=\wt h_k\quad \mbox{on }\Ga_2\times[T_0-\si,\infty),\quad\forall k\in\BN,
\end{align}
with $h_k=h_i+h_{b,k}$ which solves problem \eqref{eqn:fde} with $g=g_k$.
\end{lemma}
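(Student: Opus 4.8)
The plan is to argue by induction on $k$, peeling off the contributions of the successive boundary excitations $g_1,g_2,\dots$ one at a time, exploiting the staircase structure of $g$ in time together with the analyticity furnished by Proposition \ref{prop:sol-rep-gen}(i). The crucial structural fact is causality of the memory term: since $g_j(s)\equiv0$ for $s\le t_{2j-1}$, the convolution representation of $h_{b,j}$ shows that $h_{b,j}(t)\equiv0$ for $t\le t_{2j-1}$. Consequently, for any fixed $k$ and any $t\le t_{2k+1}$ the terms $h_{b,j}$ with $j\ge k+1$ vanish, so that on $\Ga_2\times[T_0-\si,t_{2k+1}]$ one has the \emph{finite} expansion $h=h_i+\sum_{j=1}^k h_{b,j}$ (and likewise for $\wt h$). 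Note also that $t_{2k+1}<T_1$ for every $k$, since $t_k\nearrow T_1$, so the hypothesis $h=\wt h$ on $\Ga_2\times[T_0-\si,T_1]$ covers each such interval.

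For the base of the induction I would restrict to $[T_0-\si,t_1]$, where $g\equiv0$ and hence $h=h_i$, $\wt h=\wt h_i$. The hypothesis then gives $h_i=\wt h_i$ on $\Ga_2\times[T_0-\si,t_1]$, a nondegenerate interval since $t_1>t_0=T_0>T_0-\si$. Because $h_i,\wt h_i\in C^\om(0,\infty;L^2(\pa\Om))$ by Proposition \ref{prop:sol-rep-gen}(i), the identity theorem for $L^2(\Ga_2)$-valued analytic functions extends this to $h_i=\wt h_i$ on $\Ga_2\times(0,\infty)$. This step uses only analyticity and requires no prior knowledge that $\al=\wt\al$.

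For the inductive step, assume $h_i=\wt h_i$ and $h_{b,j}=\wt h_{b,j}$ on $\Ga_2\times[T_0-\si,\infty)$ for all $j<k$. Subtracting these known identities from the finite expansion of $h-\wt h$ on $\Ga_2\times[T_0-\si,t_{2k+1}]$ leaves $h_{b,k}=\wt h_{b,k}$ on that interval. Since $g_k$ turns on only after $t_{2k-1}$ and $h_{b,k}\in C^\om(t_{2k}+\ve,\infty;L^2(\pa\Om))$ (again by Proposition \ref{prop:sol-rep-gen}(i)), both sides are analytic on $(t_{2k},\infty)$ and already agree on the overlap $(t_{2k},t_{2k+1}]$, which is nonempty as $t_{2k}<t_{2k+1}$. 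Analytic continuation then yields $h_{b,k}=\wt h_{b,k}$ on $\Ga_2\times(t_{2k},\infty)$; combined with the causal vanishing $h_{b,k}\equiv\wt h_{b,k}\equiv0$ on $[T_0-\si,t_{2k-1}]$ and the equality already obtained on $[t_{2k-1},t_{2k+1}]$, this gives $h_{b,k}=\wt h_{b,k}$ on $\Ga_2\times[T_0-\si,\infty)$, hence $h_k=h_i+h_{b,k}=\wt h_i+\wt h_{b,k}=\wt h_k$ there. This closes the induction and proves \eqref{goal1}.

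The main obstacle is the bookkeeping of the nested time windows: one must verify that the analyticity threshold $t_{2k}$ of $h_{b,k}$ falls strictly inside the finite interval $[t_{2k-1},t_{2k+1}]$ on which equality is first established, so that the analytic continuation has a genuine overlap to work with; this is exactly guaranteed by the strict monotonicity $t_{2k-1}<t_{2k}<t_{2k+1}$ of the prescribed sequence. The only other point requiring care is the causal vanishing $h_{b,j}(t)=0$ for $t\le t_{2j-1}$, which must be read off from the convolution representation in Proposition \ref{prop:sol-rep-gen} and which is precisely what makes the expansion on $[T_0-\si,t_{2k+1}]$ genuinely finite.
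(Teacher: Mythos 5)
Your proof is correct and follows essentially the same route as the paper: induction over the excitation components, using the causal vanishing of each $h_{b,j}$ before $t_{2j-1}$ to reduce to a finite sum on $[T_0-\si,t_{2k+1}]$, and then analytic continuation from Proposition \ref{prop:sol-rep-gen}(i) to extend each equality to $[T_0-\si,\infty)$. Your bookkeeping, which isolates $h_i$ in the base case and peels off the $h_{b,j}$ individually, is in fact slightly cleaner than the paper's summation $\sum_{k=1}^{\ell+1}h_k=h$ (which double-counts $h_i$), but the underlying argument is identical.
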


\begin{proof}
We prove the assertion by induction. When $k=1$, by the definition of $\psi_k(t)$, we have
$\psi_k=0$  in $(0,t_3)$ for all $k\ge2$. Then by Proposition \ref{prop:sol-rep-gen},
the condition $h=\wt h$ on $\Ga_2\times[T_0-\si,T_1]$ implies $h_1=\wt h_1$ on $\Ga_2\times[T_0-\si,t_3)$,
since $[T_0-\si,t_3)\subset[T_0-\si,T_1]$. By Proposition
\ref{prop:sol-rep-gen}(i), $h_1$ and $\wt h_1$ are $L^2(\pa\Om)$-valued
functions analytic in  $t\in(t_2+\ve,\infty)$, and hence  $h_1=\wt h_1$ for all
$t\in[T_0-\si,\infty)$. This shows the case for $k=1$.
Now assume that for some $\ell\in\BN$, the assertion \eqref{goal1} holds for
all $k=1,\dots,\ell $. Since $\psi_k=0$ in $(0,t_{2\ell+3})$, for $k\ge\ell+2$,
we deduce $\sum_{k=1}^{\ell+1}h_k=h$ in $\Ga_2\times(0,t_{2\ell+3})$.
Similarly, we have
\begin{align*}
\sum_{k=1}^{\ell+1}h_k=\sum_{k=1}^{\ell+1}\wt h_k\quad\mbox{on }\Ga_2\times(0,t_{2\ell+3}).
\end{align*}
From the induction hypothesis, we deduce $h_{\ell+1}=\wt h_{\ell+1}$ on
$\Ga_2\times[T_0-\si,t_{2\ell+3})$. Use analytic continuation again, we
obtain $h_{\ell+1}=\wt h_{\ell+1}$ on $\Ga_2\times[T_0-\si,\infty)$.
Thus, the assertion \eqref{goal1} holds for all $k\in\BN$.
\end{proof}

\begin{lemma}\label{lem:linearly_independent}
Given a nonempty open subset $\Ga$ of $\pa\Om$, for any fixed $n\in\BN^*$, the
eigenfunctions $\{\vp_{n,\ell}\}_{\ell=1}^{m_n}$ corresponding to $\la_n$ are
linearly independent on $L^2(\Ga)$.
\end{lemma}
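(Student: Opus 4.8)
The plan is to recognize that the assertion is a \emph{trace injectivity} statement: since $\{\vp_{n,\ell}\}_{\ell=1}^{m_n}$ is an $L^2\II$-orthonormal basis of $\ker(A-\la_n)$, it is automatically linearly independent in $L^2\II$, so it suffices to show that any linear combination $w:=\sum_{\ell=1}^{m_n}c_\ell\vp_{n,\ell}$ whose trace vanishes on $\Ga$ must vanish identically in $\Om$; then the $L^2\II$-independence forces $c_1=\cdots=c_{m_n}=0$. Such $w$ solves $\cA w=\la_n w$ in $\Om$ with $a\pa_\nu w=0$ on $\pa\Om$. Because $\mathrm{diam}(D)<\mathrm{dist}(D,\pa\Om)$, the inclusion $D$ is compactly contained in $\Om$, so $a\equiv1$ in a neighborhood of $\pa\Om$; hence $\pa_\nu w=0$ on $\pa\Om$, and in particular $w$ carries \emph{vanishing Cauchy data} $w=\pa_\nu w=0$ on the relatively open piece $\Ga$.

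Next I would propagate this vanishing inward. In a one-sided neighborhood of $\pa\Om$ the coefficient is constant, so $w$ satisfies the constant-coefficient equation $\Delta w+\la_n w=0$, whose solutions are real-analytic. I would exploit the zero Cauchy data on $\Ga$ by extending $w$ by zero across $\Ga$: the matching of both $w$ and $\pa_\nu w$ at $\Ga$ ensures the zero extension is a weak solution of the same equation in a full two-sided neighborhood of $\Ga$ in $\BR^d$, hence real-analytic there by analytic hypoellipticity; since it vanishes on the exterior open set, it vanishes near $\Ga$ inside $\Om$ as well (equivalently, one invokes Holmgren's theorem, as $\pa\Om$ is noncharacteristic for the Laplacian). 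Since $a\equiv1$ throughout the connected open set $\Om\setminus\ov D$, $w$ is real-analytic there, and vanishing on a nonempty open subset forces $w\equiv0$ on all of $\Om\setminus\ov D$.

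It remains to cross the interface $\pa D$. From $w\equiv0$ in $\Om\setminus\ov D$ I read off $w|_-=0$ and $\pa_n w|_-=0$ on $\pa D$; the transmission conditions \eqref{eqn:trans-cond}, together with $1+\mu\neq0$ (as $\mu>-1$), then yield $w|_+=0$ and $\pa_n w|_+=0$ on $\pa D$. Thus $w|_D$ has vanishing Cauchy data on $\pa D$ and solves the constant-coefficient equation $\Delta w+\frac{\la_n}{1+\mu}w=0$ in $D$; the same analyticity/Holmgren argument gives $w\equiv0$ in $D$. Hence $w\equiv0$ in $\Om$, as desired.

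The delicate point is precisely that $a$ is discontinuous across $\pa D$, which obstructs a direct appeal to unique continuation through the interface. I circumvent this by propagating the vanishing only within the two subregions where $a$ is constant, and therefore where solutions are genuinely real-analytic, and by gluing through \eqref{eqn:trans-cond}, which transfers the zero Cauchy data across $\pa D$. A secondary technical point requiring care is the connectedness of $\Om\setminus\ov D$ (used to spread the analytic vanishing from a neighborhood of $\Ga$ to the whole exterior subregion), which follows from $D$ being a convex polygon compactly contained in the connected domain $\Om$.
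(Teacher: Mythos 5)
Your argument is internally sound, but it proves the lemma for the wrong operator. Lemma \ref{lem:linearly_independent} is stated and used in Appendix \ref{sec:uniqueness-general}, where the elliptic operator is the general one of Remark \ref{rem:general-a}, namely $\cA u=-\nb\cdot(a\nb u)+qu$ with $a\in C^2(\ov\Om)$, $a>0$, and $q\in L^\infty(\Om)$; it feeds into the proof of Theorem \ref{thm:general}, whose eigenfunctions are those of this smooth-coefficient operator, not of the piecewise-constant one from the main body. Your entire mechanism --- real-analyticity of solutions in regions where the coefficient is constant, extension by zero / Holmgren across $\Ga$, and gluing through the transmission conditions \eqref{eqn:trans-cond} --- is unavailable here: for merely $C^2$ principal part and $L^\infty$ potential the eigenfunctions are not real-analytic, Holmgren's theorem does not apply, and there is no interface $\pa D$ to glue across. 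The paper's proof is instead a one-step appeal to the quantitative unique continuation principle for second-order elliptic operators with Lipschitz leading coefficients and bounded lower-order terms (\cite[Theorem 3.3.1]{isakov2017inverse}): the combination $\vp=\sum_j c_j\vp_{n,j}$ satisfies $\cA\vp=\la_n\vp$ with vanishing Cauchy data $\vp=\pa_\nu\vp=0$ on $\Ga$, hence $\vp\equiv0$ in $\Om$ by UCP, and $L^2(\Om)$-independence finishes. So as a proof of the lemma in the setting in which it is invoked, your argument has a genuine gap: the key propagation step fails for the relevant coefficient class, and the missing ingredient is precisely the Carleman-estimate-based UCP.

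That said, your proof is a correct (and more delicate) argument for the \emph{piecewise-constant} version of the statement, where the paper's citation of the UCP would itself need justification because $a$ is discontinuous across $\pa D$. The reduction to zero Cauchy data on $\Ga$, the connectedness of $\Om\setminus\ov D$ (which your hypothesis $\mathrm{diam}(D)<\mathrm{dist}(D,\pa\Om)$ does guarantee), and the transfer of zero Cauchy data across $\pa D$ via \eqref{eqn:trans-cond} with $1+\mu\ne0$ are all handled correctly; the only cosmetic point is that $\pa D$ is merely piecewise flat, so the Holmgren/zero-extension step inside $D$ should be run on a single open face. If you want your write-up to serve as a proof of the lemma as stated, replace the analyticity machinery by the unique continuation principle for $-\nb\cdot(a\nb\cdot)+(q-\la_n)$ with $a\in C^2(\ov\Om)$; your version is then better kept as a remark showing that the conclusion persists for the discontinuous coefficients of Section \ref{sec:uniqueness}.
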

\begin{proof}
Suppose that on the contrary: there are $\{c_j\}_{j=1}^{m_n}\subset\mathbb{R}$ such that
$\sum_{j=1}^{m_n}c_j\vp_{n,j}=0$ on $\Ga$.
Let $\vp=\sum_{j=1}^{m_n}c_j\vp_{n,j}$. Then $\vp$ satisfies
$\cA\vp=\la_n\vp ~\mbox{in }\Om$, $\pa_\nu\vp=0~ \mbox{on }\pa\Om$,
and $\vp=0 ~ \mbox{on }\Ga$.
Then the regularity on $a$ and $q$ and  unique continuation principle \cite[Theorem 3.3.1]{isakov2017inverse}
imply $\vp\equiv0$ in $\Om$. Since $\vp_{n,j}$ are linearly independent in
$L^2(\Om)$, we obtain $c_j=0$, $j=1,\dots,m_n$, i.e. the desired linear independence.
\end{proof}

Now we can state the proof of Theorem \ref{thm:general}(ii).

\begin{proof}[Proof of Theorem \ref{thm:general}(ii)]
By Lemma \ref{lem:h_k}, we have $h_k=\wt h_k$ on $\Ga_2\times(T_0-\si,\infty)$
for any $k\in\BN$. Note that $h_k=h_i+h_{b,k}$ solves problem \eqref{eqn:fde}
with $g$ replaced by $g_k$. We have the following representations
\begin{align*}
h_i(t) & =\rho_0+\rho_1t^\al+\sum_{n\in\BK\cap\BN^*}\rho_n E_{\al,1}(-\la_n t^\al),\\
h_{b,k}(t) & =\sum_{n=1}^\infty\sum_{j=1}^{m_n}\int_0^t(t-s)^{\al-1}E_{\al,\al}(-\la_n(t-s)^\al)\langle g_k(s),\vp_{n,j}\rangle\,\d s\,\vp_{n,j}.
\end{align*}
By the choice of $g$, the interval $[0,T]$ can be divided into
$[0,T_0]$ and $[T_0,T]$. For $t\in(0,T_0)$, $g_k(t)\equiv0$, Theorem \ref{thm:general}(i)
implies that $\{(\rho_\ell,\la_\ell)\}_{\ell\in\BK}=\{(\wt\rho_\ell,\wt\la_\ell)\}_{\ell\in\wt\BK}$
and $\al=\wt\al$, and hence $h_i(t)=\wt h_i(t)$ for all $t>0$. For $t\in[T_0,T]$,
this and the condition $ h_k(t)=\wt h_k(t)$ lead to $h_{b,k}(t)=\wt h_{b,k}(t)$ in $L^2(\Ga_2)$. Thus,
\begin{align*}
& \sum_{n=1}^\infty\sum_{j=1}^{m_n}\int_{t_1}^t(t-s)^{\al-1}E_{\al,\al}(-\la_n(t-s)^\al)\langle g_k(s),\vp_{n,j}\rangle_{L^2(\Ga_1)}\,\d s\,\vp_{n,j}\\
= & \sum_{n=1}^\infty\sum_{j=1}^{\wt m_n}\int_{t_1}^t(t-s)^{\al-1}E_{\al,\al}(-\wt\la_n(t-s)^\al)\langle g_k(s),\wt\vp_{n,j}\rangle_{L^2(\Ga_1)}\,\d s\,\wt\vp_{n,j},\quad t\in[t_1,\infty).
\end{align*}
By Proposition \ref{prop:sol-rep-gen}(ii), applying Laplace transform on both sides yields
\begin{align}\label{eq:Laplace_ug general}
\sum_{n=1}^\infty\sum_{j=1}^{m_n}\f{\langle\wh g_k(z),\vp_{n,j}\rangle\vp_{n,j}}{z^\al+\la_n}=\sum_{n=1}^\infty\sum_{j=1}^{\wt m_n}\f{\langle\wh g_k(z),\wt\vp_{n,j}\rangle\wt\vp_{n,j}}{z^\al+\wt\la_n},\quad\forall\,\Re(z)>0.
\end{align}
Next, we repeat the argument of Theorems \ref{thm:unique al} and \ref{thm:unique q} to deduce
$\la_n=\wt\la_n$, $\forall n\in\BN$. 
To this end, let $U_k\in \mathrm{Dom}(A^{\frac{1}{4}+\ve})$ be the solution of the elliptic
equation with a Neumann boundary data $\chi b_k\eta_k$, for all $\ze$ in any compact subset of
$\BC\setminus\{-\la_n,-\wt\la_n\}_{n\in\BN}$, we have
\begin{align*}
&\left\|\sum_{n=1}^\infty\sum_{j=1}^{m_n}\f{\langle\wh g_k(\eta^\frac{1}{\al}),\vp_{n,j}\rangle\vp_{n,j}}{\ze+\la_n}\right\|_{{\rm Dom}(A^{\frac{1}{4}+\ve})}^2  \le c\sum_{n=1}^\infty\la_n^{\frac{1}{2}+2\ve}\sum_{j=1}^{m_n}\left|\f{\langle\chi b_k\eta_k,\vp_{n,j}\rangle}{\ze+\la_n}\right|^2\\
=&c\sum_{n=1}^\infty\la_n^{\frac{1}{2}+2\ve}\sum_{j=1}^{m_n}\left|\f{\la_n\left(U_k,\vp_{n,j}\right)}{\ze+\la_n}\right|^2\le c\|U_k\|_{{\rm Dom}(A^{\frac14+\ve})}^2<\infty.
\end{align*}
Since each term of the series is a $\mathrm{Dom}(A^{\frac{1}{4}+\ve})$-valued function analytic in $\ze$ and the series converges uniformly for $\ze$ in a compact subset set of $\BC\setminus\{-\la_n,-\wt\la_n\}_{n\in\BN}$, by the trace theorem, we deduce that both sides of \eqref{eq:Laplace_ug general} are $L^2(\pa\Om)$-valued functions analytic in $\ze\in\BC\setminus\{-\la_n,-\wt\la_n\}_{n\in\BN}$. Assuming $\la_j\notin\{\wt\la_n\}_{n\in\BN}$, by choosing a small circle centered at $-\la_j$ and then using Cauchy integral formula, we obtain
\begin{align}\label{eq:eigenvalue_general}
\f{2\pi\sqrt{-1}}{\la_j}\sum_{j=1}^{m_n}\langle\wh g_k,\vp_{n,j}\rangle\vp_{n,j}(y)=0,\quad\forall k\in\BN.
\end{align}
This and Lemma \ref{lem:linearly_independent} (with $\Ga=\Ga_2$) imply $\langle\wh g_k,\vp_{n,j}\rangle=0$, $\forall k\in\BN$, $j=1,\dots,m_n$. Since $\wh g_k=\chi b_k\wh\psi\eta_k$, by the density of $\eta_k$ in $H^{\frac{1}{2}}(\Ga_1)$, we have $\vp_{n,j}=0$ a.e. on  $\Ga_1$, $j=1,\dots,m_n$. Since $\pa_\nu\vp_{n,j}=0$, unique continuation principle \cite[Theorem 3.3.1]{isakov2017inverse} implies $\vp_{n,j}\equiv0$ in $\Om$, which is a contradiction. Hence, $\la_j\in\{\wt\la_n\}_{n\in\BN}$ for every $j\in\BN$. Likewise, we can prove $\wt\la_j\in\{\la_n\}_{n\in\BN}$ for every $j\in\BN$, and hence $\la_n=\wt\la_n$, $\forall n\in\BN^*$. It follows directly from \eqref{eq:Laplace_ug general} that
\[
\sum_{n=1}^\infty\f1{\eta+\la_n}\Big(\sum_{j=1}^{m_n}\langle\wh g_k(z),\vp_{n,j}\rangle\vp_{n,j}(y)-\sum_{j=1}^{\wt m_n}\langle\wh g_k(z),\wt\vp_{n,j}\rangle\wt\vp_{n,j}(y)\Big)=0,\quad\mbox{a.e. }y\in\Ga_2.
\]
Using Cauchy integral theorem again, we have
\[
\sum_{j=1}^{m_n}\langle\wh g_k(z),\vp_{n,j}\rangle\vp_{n,j}(y)=\sum_{j=1}^{\wt m_n} \langle\wh g_k(z),\wt\vp_{n,j}\rangle\wt\vp_{n,j}(y),\quad\mbox{a.e. }y\in\Ga_2,\ \forall k,n\in\BN.
\]
By the construction of $g_k$, it is equivalent to
\begin{align*}
&b_k\psi_k(z)\int_{\pa\Om}\chi(y')\eta_k(y')\Te_n(y',y)\,\d y'\\
=&b_k\psi_k(z)\int_{\pa\Om}\chi(y')\eta_k(y')\wt\Te_n(y',y)\,\d y',\quad\forall n,k\in\BN,\ \Re(z)>0,
\end{align*}
with $\Te_n(y',y):=\sum_{j=1}^{m_n}\vp_{n,j}(y')\vp_{n,j}(y)$.
Since the set $\{\eta_k\}_{k\in\mathbb{N}}$ is dense in $H^{\frac{1}{2}}(\pa\Om)$ and
$\chi\equiv1$ on $\Ga_1'$, we deduce $\Te_n(y',y)=\wt\Te_n(y',y)\in L^2(\Ga_1')
\times L^2(\Ga_2)$ for all $n\in\BN$. From \cite[Theorem 1.1]{CanutoKavian:2001}
(see also \cite[Lemma 4.1]{Kian:2022}), we deduce that $m_n=\wt m_n$ and after an
orthogonal transformation
\begin{align}\label{eq:equal varphi}
\vp_{n,j}(y)=\wt\vp_{n,j}(y),\quad j=1,\cdots,m_n,\ \forall y\in\pa\Om,\ n\in\BN.
\end{align}
By \cite[Corollary 1.7]{CanutoKavian:2004}, the equal Dirichlet boundary spectral data
\eqref{eq:equal varphi} imply the desired uniqueness.
\end{proof}

\bibliographystyle{abbrv}

\end{document}